\documentclass[3p,times,sort&compress]{elsarticle}

\usepackage[T1]{fontenc}

\usepackage{amsmath,amssymb,amsthm,bm}

\usepackage{mathrsfs}

\usepackage{booktabs,array,float,microtype}

\usepackage{graphicx,tikz}

\usepackage[colorlinks=true,allcolors=blue]{hyperref}
\usepackage{orcidlink}

\theoremstyle{plain}
\newtheorem{theorem}{Theorem}[section]
\newtheorem{proposition}[theorem]{Proposition}
\newtheorem{corollary}[theorem]{Corollary}

\theoremstyle{remark}
\newtheorem{remark}[theorem]{Remark}
\theoremstyle{definition}
\newtheorem{example}[theorem]{Example}

\newcommand{\ZZ}{\mathbb{Z}}

\newcommand{\CC}{\mathbb{C}}

\newcommand{\PP}{\mathbb{P}}
\newcommand{\EE}{\mathbb{E}}

\newcommand{\mku}{\mkern1mu}
\renewcommand{\,}{\ifmmode\mkern2mu\else\thinspace\fi}

\newcommand{\deq}{\mathrel{\mathop:}=} 
\newcommand{\eqd}{=\mathrel{\mathop:}} 

\renewcommand{\leq}{\leqslant}
\renewcommand{\geq}{\geqslant}

\newcommand{\C}[2]{\binom{#1}{#2}}

\newcommand{\bis}[3]{\genfrac{(}{)}{0pt}{}{#1}{#2}_{\!#3}}

\newcommand{\bisnom}{\texorpdfstring{bi\textsuperscript{$s$}nomial}{bisnomial}}

\begin{document}

\begin{frontmatter}

\title{Bounded compositions and products of chains, or \bisnom{}s everywhere \\ from de~Moivre's formula to spin multiplicities and constant-sum codes}

\author[usthb]{Hac\`{e}ne Belbachir\,\orcidlink{0000-0001-8540-3033}}
\ead{hacenebelbachir@gmail.com} 
\affiliation[usthb]{organization={USTHB, Faculty of Mathematics, RECITS Laboratory}, addressline={P.O. Box 32 El Alia}, city={Bab Ezzouar, Algiers}, postcode={16111}, country={Algeria}}
\cortext[cor1]{Corresponding author.}

\author[usp]{Jos\'{e} Ricardo G. Mendon\c{c}a\corref{cor1}\,\orcidlink{0000-0002-5516-0568}}
\ead{jricardo@usp.br}
\affiliation[usp]{organization={Escola de Artes, Ci\^{e}ncias e Humanidades, Universidade de S\~{a}o Paulo}, addressline={Avenida Arlindo Béttio, 1000}, city={03828-000 S\~{a}o Paulo}, state={SP}, country={Brazil}} 

\begin{abstract}
How many ways are there to select $k$ objects from $n$ types when type $i$ is available in only $s_{i}$ copies? For equal bounds, this is de~Moivre's problem of obtaining a prescribed total with $n$ dice, now a textbook application of the principle of inclusion--exclusion. For arbitrary bounds, the answer is encoded by the rank polynomial $\prod_{i=1}^{n}(1+x+\cdots+x^{s_{i}})$. Its coefficients enumerate bounded compositions and submultisets, give, after normalization, the probability mass function of a sum of independent discrete uniform random variables, and are the weight multiplicities of a tensor product of $\mathrm{SU}(2)$ representations. Most of these interpretations are classical but dispersed; here we place them in a common notation and document their provenance. A complement involution proves symmetry and boundary identities. For arbitrary bounds, lower-half first differences give both symmetric-chain starts and mixed-spin multiplicities, and the chain-length and irreducible-dimension profiles agree. Convolution gives log-concavity and the central mode. For equal bounds with fixed $s \geq 1$ and even $ns$, local expansions give central asymptotics with explicit $1/n$ and $1/n^{2}$ corrections for the coefficient and its first difference. A Poisson limit gives the first boundary crossover of these differences at scale $k \sim \lambda n^{s/(s+1)}$, with factor $e^{-\alpha\lambda^{s+1}}$ when the proportion of bounds equal to the fixed minimum $s \geq 1$ tends to $\alpha>0$. Central first differences include the $s$-Catalan numbers, themselves Littlewood--Richardson coefficients; central layers yield maximum-size constant-sum codebooks for hash-based signatures. Under the polynomial binomial convention, the full inclusion--exclusion transform vanishes, and the omitted terms sum to the negative of the number of bounded compositions.
\end{abstract}

\begin{keyword}
binomial coefficients \sep \bisnom{} coefficients \sep bounded compositions \sep spin multiplicities \sep Clebsch-Gordan series \sep products of chains \sep symmetric chain decompositions \sep constant-sum codes
\MSC[2020] Primary 05A15, Secondary 05A10 \sep 05A16 \sep 05E10 \sep 06A07 \sep 60F05 \sep 94A62
\end{keyword}

\end{frontmatter}


\section{Introduction}
\label{sec:intro}

Let $\bm{s}=(s_{1},\dots,s_{n})$ be a vector of non-negative integers and put $N=s_{1}+\cdots+s_{n}$. For $0 \leq k \leq N$, let $R_{\bm{s}}(k)$ be the number of integer solutions of
\begin{equation}
\label{eq:problem}
k_{1}+\cdots+k_{n} = k, \quad 0 \leq k_{i} \leq s_{i}, \quad i = 1,\dots,n,
\end{equation}
or, equivalently,
\begin{equation}
\label{eq:rankpoly}
R_{\bm{s}}(k) = [x^{k}]\prod_{i=1}^{n}(1+x+\cdots+x^{s_{i}}).
\end{equation}
Set $R_{\bm{s}}(k)=0$ for $k < 0$ or $k > N$. In the multiset model, type $i$ has $s_{i}$ indistinguishable objects, of which we choose $k_{i}$. Thus $R_{\bm{s}}(k)$ gives the number of size-$k$ submultisets of a multiset of total size $N$.

The solution to Problem~\eqref{eq:problem} is a standard exercise in inclusion--exclusion; see, for example, Charalambides \cite[Theorem~4.3]{Charalambides2002}, Riordan \cite[Problem~1.7]{Riordan1958}, or West \cite[Example~4.1.6]{West2021}. The generating polynomial in \eqref{eq:rankpoly} is the rank polynomial of a product of chains and, after normalization, the probability generating function of a sum of independent discrete uniform random variables. When $s_{i}=2\sigma_{i}$, with each $\sigma_{i} \in \tfrac{1}{2}\ZZ_{\geq 0}$, the coefficients are the weight-space dimensions of a tensor product of $\mathrm{SU}(2)$ representations. The first differences on the lower half of the array give the multiplicities of the irreducible constituents of this tensor product.

In the homogeneous case $s_{i}=s$ with even $n=2\nu$, the central first difference is the $s$-Catalan number, which Linz identifies with a value of a Littlewood--Richardson stretching polynomial. For $s=1$ and $s=2$, the resulting numbers are, respectively, the Catalan and even-indexed Riordan numbers. Near the boundary, by contrast, the coefficients and their first differences collapse to simplex numbers. The central rank layers of the product of chains are its largest antichains and therefore yield maximum-size constant-sum codebooks in the Winternitz framework, with applications to SPHINCS$^{+}$ and XMSS \cite{ZhangCui2023}. Table~\ref{tab:lives} summarizes the principal interpretations and their specializations.

\begin{table}[htbp]
\centering
\caption{Several interpretations of the coefficient array $R_{\bm{s}}(k)$.}
\label{tab:lives}
\medskip
\begin{tabular}{@{}lcc@{}}
\toprule
Interpretation of $R_{\bm{s}}(k)$ & Specialization & Section(s) \\
\midrule
Bounded compositions; size-$k$ submultisets & general $\bm{s}$
 & {\S}\ref{sec:pie}--{\S}\ref{sec:sym} \\
Probability mass function of a discrete uniform sum (after normalization)
 & general $\bm{s}$
 & {\S}\ref{sec:prob}, {\S}\ref{sec:extensions} \\
Rank sizes of a product of chains $\prod_{i}[0,s_{i}]$ & general $\bm{s}$
 & {\S}\ref{sec:chains} \\
One-dimensional discrete box spline & general $\bm{s}$
 & {\S}\ref{sec:conventions} \\
Weight multiplicities in $\bigotimes_{i}V_{\sigma_{i}}$ & $s_{i}=2\sigma_{i}$
 & {\S}\ref{sec:spin} \\
Irreducible multiplicities (lower-half first differences) & $s_{i}=2\sigma_{i}$
 & {\S}\ref{sec:spin} \\
Central first differences: $s$-Catalan and Littlewood--Richardson coefficients
 & $s_{i}=s$, $n=2\nu$ & {\S}\ref{sec:scat} \\
Constant-sum antichains for hash-based signatures
 & $s_{i}=s$, $k = \lfloor ns/2\rfloor$ & {\S}\ref{sec:hash} \\
\bottomrule
\end{tabular}
\end{table}

Our objective in this work is to survey these classical identities and interpretations and to record their provenance and terminology across the relevant literatures. In our derivations we carefully distinguish the globally valid and support-sharp forms of de Moivre's sum and combine the sharp cutoff with complement symmetry. We also connect first differences with symmetric-chain starts and equal- and mixed-spin multiplicities, and trace the boundary formulas from their exact range to a Poisson crossover. In an algebraic coda, we examine the effect of using the polynomial convention for binomial coefficients with negative upper arguments. Examples throughout the article illustrate the main identities and show how the different interpretations apply in concrete cases.


\section{Notation, terminology, and history}
\label{sec:notation}

\subsection{Notation}

Throughout, $n$ denotes the number of types, cells, or particles, depending on the interpretation. The integer $k$ is the prescribed total, and $k_{i}$ is the contribution of the component indexed by $i$. The individual upper bounds are $s_{i}$; in the homogeneous case they all equal $s$, and $N = s_{1}+\cdots+s_{n}$ is the maximum possible total. We use $R_{\bm{s}}(k)$ for the inhomogeneous enumeration in \eqref{eq:rankpoly} and write $\bis{n}{k}{s}$ when all bounds equal $s$. In the representation-theoretic sections, $\sigma$ denotes a single-particle spin, $m$ the total magnetization, and $J$ the total spin; when the number of particles is even, we write $n=2\nu$. The symbol $q$ is reserved for deformation parameters.

The homogeneous notation $\bis{n}{k}{s}$ follows Belbachir and Benmezai \cite{BelbachirBenmezai2014} and Linz \cite{Linz2022}; the notation $R_{\bm{s}}(k)$ keeps the full vector of unequal bounds explicit.

\subsection{Terminology}

In the homogeneous case the numbers defined by~\eqref{eq:problem} are the \emph{\bisnom{} coefficients} $\bis{n}{k}{s}$, defined by the expansion
\begin{equation}
\label{eq:bisdef}
(1+x+x^{2}+\cdots+x^{s})^{n} = \sum_{k \geq 0}\bis{n}{k}{s}\,x^{k},
\quad \bis{n}{k}{s} = 0 \ \text{ for } k < 0 \text{ or } k > ns.
\end{equation}
Other names for these coefficients include \emph{ordinary multinomial coefficients}, \emph{polynomial coefficients}, and \emph{extended} or \emph{generalized binomial coefficients of order} $s+1$. Here \emph{polynomial coefficients} names the array, not the convention for ordinary binomial coefficients that we discuss in Section~\ref{sec:conventions}. For $s=1$, the coefficients reduce to the ordinary binomial coefficients, $\bis{n}{k}{1} = \C{n}{k}$. The full coefficient array is the \emph{\bisnom{} triangle}, also called the \emph{$s$-Pascal triangle}.

The abundance of names reflects the array's appearance in several literatures. Belbachir and Igueroufa \cite[Table~1]{BelbachirIgueroufa2020b} provide a historical census, extending from Euler (1778) to Belbachir and Benmezai (2014), and date the term \emph{\bisnom{}} to the latter authors \cite{BelbachirBenmezai2014}. Table~2 of the same paper catalogs the names used for the coefficient array.

\subsection{History}

The homogeneous form of Problem~\eqref{eq:problem} goes back to de Moivre. In its original guise, it asks for the number of ways to obtain a prescribed total with $n$ dice having $s+1$ numbered faces. De Moivre stated the problem for general $n$ and $s$ without proof in \emph{De Mensura Sortis} (1711). His first published proof, based essentially on generating functions, appeared in the \emph{Miscellanea Analytica} (1730, p.~196). Montmort had independently obtained an inclusion--exclusion solution by 1713; see Hald \cite{Hald1984} and Doumas \cite{Doumas2013}. The gap between de Moivre's statement and his first published proof explains some of the conflicting dates in the later coefficient literature.

Freund \cite{Freund1956} later interpreted $\bis{n}{k}{s}$ as the number of distributions of $k$ objects among $n$ cells with at most $s$ objects in each cell. He treated the problem under the heading of \emph{restricted occupancy theory}; see also Bondarenko \cite{Bondarenko1993}.


\section{Enumeration by inclusion--exclusion}
\label{sec:pie}

\subsection{Two basic formulas}

The enumeration in Problem~\eqref{eq:problem} uses two basic combinatorial formulas. For subsets $A_{1},\dots,A_{M}$ of a finite set $S$, the first is the principle of inclusion--exclusion in the form
\begin{equation}
\label{eq:pie}
\Big|S \setminus \bigcup_{i=1}^{M}A_{i}\Big| \,= 
\sum_{I \subseteq \{1,\dots,M\}}(-1)^{|I|}\,\Big|\bigcap_{i\in I}A_{i}\Big|,
\end{equation}
where the intersection indexed by the empty set is $S$. The second is the stars-and-bars formula for the number of non-negative solutions of $k_{1}+\cdots+k_{n}=k$, namely
\begin{equation}
\label{eq:stars}
\C{k+n-1}{n-1} = \C{k+n-1}{k},
\end{equation}
where the equality follows from the symmetry of the ordinary binomial coefficient.

By default, we use the \emph{elementary convention} for binomial coefficients
\begin{equation}
\label{eq:elementary}
\C{u}{v} = 0 ~\text{ unless }~ 0 \leq v \leq u.
\end{equation}
This convention is standard in the \bisnom{} literature \cite{BelbachirBouroubi2008,BelbachirIgueroufa2020b,BelbachirBenmezai2014}. It makes inclusion--exclusion terms with negative or too-small upper arguments vanish, as required for empty intersections. In Section~\ref{sec:conventions}, we show that the generating function produces exactly the terms the convention retains and evaluate the sum of the additional terms that the alternative \emph{polynomial convention} (see, for instance, \cite[{\S}5.1]{GrahamKnuth1994}) contributes. Under this latter convention, $\C{u}{v}$ is the polynomial $u(u-1)\cdots(u-v+1)/v!$ in $u$ for each fixed integer $v \geq 0$.

\subsection{Enumeration in the inhomogeneous case}

For $0 \leq k \leq N$, let $S$ be the set of non-negative integer solutions of $k_{1}+\cdots+k_{n}=k$, without the upper bounds in~\eqref{eq:problem}. The stars-and-bars formula gives
\begin{equation}
\label{eq:Sigma0}
\Sigma_{0} = |S| = \C{k+n-1}{n-1}.
\end{equation}
For each $i$, let $A_{i}\subseteq S$ be the set of solutions that violate the upper bound on $k_{i}$, so that $k_{i} \geq s_{i}+1$. Writing $k_{i} = s_{i}+1+b_{i}$ gives a bijection between $A_{i}$ and the non-negative solutions of
\begin{equation}
\label{eq:shift}
k_{1}+\cdots+b_{i}+\cdots+k_{n} = k-(s_{i}+1),
\end{equation}
whose number is, by~\eqref{eq:stars},
$\C{k-(s_{i}+1)+n-1}{n-1}$. Hence
\begin{equation}
\label{eq:Sigma1}
\Sigma_{1} = \sum_{i=1}^{n}|A_{i}| = \sum_{i=1}^{n}\C{k-(s_{i}+1)+n-1}{n-1}.
\end{equation}
For distinct indices $i_{1},\dots,i_{j}$, shifting the corresponding $j$ coordinates reduces the prescribed total by $\sum_{l=1}^{j}(s_{i_{l}}+1)$. Summing over all $j$-element sets of indices gives
\begin{equation}
\label{eq:Sigmaj}
\Sigma_{j} = \sum_{1 \leq i_{1} < \cdots < i_{j} \leq n}
|A_{i_{1}} \cap \cdots \cap A_{i_{j}}| = 
\sum_{1 \leq i_{1} < \cdots < i_{j} \leq n} \C{\,k-\sum\limits_{l=1}^{j}(s_{i_{l}}+1)+n-1\,}{n-1}.
\end{equation}
Because $k \leq N$, the intersection $A_{1} \cap \cdots \cap A_{n}$ is empty: if every upper bound were violated, then $\sum_{i=1}^{n}k_{i} \geq N+n > k$. Hence $\Sigma_{n}=0$, and inclusion--exclusion gives
\begin{equation}
\label{eq:9}
R_{\bm{s}}(k) = \sum_{j=0}^{n-1}(-1)^{j} \sum_{1 \leq i_{1} < \cdots < i_{j} \leq n} \C{\,k-\sum\limits_{l=1}^{j}(s_{i_{l}}+1)+n-1\,}{n-1}.
\end{equation}
For $j=0$, the inner sum contains only the empty index set and equals $\C{k+n-1}{n-1}$.

Formula~\eqref{eq:9} is a special case of the lower-and-upper-bound result of Charalambides \cite[Theorem~4.3]{Charalambides2002}. MacMahon \cite[{\S\S}38--39]{MacMahon1920} treats bounded multiplicities by symmetric-function methods.

If $L_{i}$ denotes the number of types whose bound equals $i$, collecting equal bounds rewrites the generating polynomial as $\prod_{i \geq 1}(1+x+\cdots+x^{i})^{L_{i}}$. The coefficients in this expansion are the classical supernomial coefficients of Schilling and Warnaar \cite{SchillingWarnaar1998}; see also Belbachir, Bouroubi, and Khelladi \cite[Section~1]{BelbachirBouroubi2008}. Equations~\eqref{eq:27}--\eqref{eq:28} describe the corresponding supernomial boundary regimes.

\begin{example}[Unequal bounds]
\label{ex:unequal-overlap}
Take $\bm{s}=(1,2,4)$ and $k=5$. There are $\C{7}{2}=21$ unrestricted non-negative solutions. The sets defined by $k_{1}\geq2$, $k_{2}\geq3$, and $k_{3}\geq5$ have sizes $\C{5}{2}=10$, $\C{4}{2}=6$, and $\C{2}{2}=1$, respectively. The only nonempty pairwise intersection is $A_{1}\cap A_{2}=\{(2,3,0)\}$, so equation~\eqref{eq:9} gives
\[
R_{(1,2,4)}(5)=21-10-6-1+1=5.
\]
The five admissible tuples are $(0,1,4)$, $(0,2,3)$, $(1,0,4)$, $(1,1,3)$, and $(1,2,2)$. The final $+1$ corrects the double subtraction of $(2,3,0)$.
\end{example}

\subsection{The homogeneous enumeration: de Moivre's alternating summation}

Suppose $n > 1$ and all bounds equal $s$, so that $N = ns$. Riordan \cite[Problem~1.7]{Riordan1958} calls the resulting homogeneous coefficients \emph{regular}. In~\eqref{eq:Sigmaj}, the binomial coefficient then depends only on $j$, while the $j$ restricted coordinates can be chosen in $\C{n}{j}$ ways. Hence
\begin{equation}
\label{eq:10}
\Sigma_{j} = \C{n}{j}{\mku}\C{k-j(s+1)+n-1}{n-1}.
\end{equation}
On the admissible range $0 \leq k \leq ns$, we have $\Sigma_{n}=0$, and inclusion--exclusion reduces to
\begin{equation}
\label{eq:11}
\bis{n}{k}{s} = \sum_{j=0}^{n-1}(-1)^{j}\Sigma_{j} = \sum_{j=0}^{n-1}(-1)^{j}{\mku}\C{n}{j}{\mku}\C{k-j(s+1)+n-1}{n-1}.
\end{equation}
For $(n,s,k)=(3,2,4)$, equation~\eqref{eq:11} gives $\C{6}{2}-3\C{3}{2}=6$, corresponding to the three permutations of $(2,2,0)$ and the three permutations of $(2,1,1)$.

Identity~\eqref{eq:11} is de Moivre's alternating summation. Belbachir, Bouroubi, and Khelladi \cite[Theorem~2.1]{BelbachirBouroubi2008} state and prove the support-sharp form
\begin{equation}
\label{eq:BBK21}
\bis{n}{k}{s} = \sum_{j=0}^{\lfloor k/(s+1)\rfloor}(-1)^{j}{\mku}\C{n}{j}{\mku}\C{k-j(s+1)+n-1}{n-1}.
\end{equation}
Their proof uses the generating-function factorization
\begin{equation}
\label{eq:gfproof}
(1+x+\cdots+x^{s})^{n} = \frac{(1-x^{s+1})^{n}}{(1-x)^{n}} = \Bigg[\sum_{j=0}^{n}(-1)^{j}{\mku}\C{n}{j}\,x^{j(s+1)}\Bigg] \Bigg[\sum_{i \geq 0}\C{i+n-1}{n-1}\,x^{i}\Bigg].
\end{equation}
Extracting the coefficient of $x^{k}$ from~\eqref{eq:gfproof} gives~\eqref{eq:BBK21}. Belbachir and Benmezai \cite[Eq.~(5)]{BelbachirBenmezai2014} and Belbachir and Igueroufa \cite[Eq.~(3)]{BelbachirIgueroufa2020b} also give~\eqref{eq:BBK21} under the name \emph{de Moivre alternate summation}. Thus \eqref{eq:BBK21} is the support-sharp form of \eqref{eq:11}.

\begin{remark}[Presentations in the literature]
\label{rem:presentations}
Equation~\eqref{eq:11} occurs in several equivalent forms, differing in the form of the binomial coefficient and the upper limit of summation:
\medskip
\begin{center}
\small
\begin{tabular}{@{} > {\raggedright\arraybackslash}p{0.40\linewidth}
 > {\centering\arraybackslash}p{0.36\linewidth}
 > {\centering\arraybackslash}p{0.15\linewidth}@{}}
\toprule
Source & Binomial coefficient & Upper limit on $j$ \\
\midrule 
Riordan \cite[Prob.~1.7]{Riordan1958}
 & $\displaystyle\C{k-j(s+1)+n-1}{k-j(s+1)}$ & implicit \\[3ex]
West \cite[Ex.~4.1.6]{West2021}, Charalambides \cite[Ex.~4.7]{Charalambides2002}
 & $\displaystyle\C{k-j(s+1)+n-1}{n-1}$ & $n$ \\[3ex]
Charalambides \cite[eq.~(2.8)]{Charalambides1982}
 & $\displaystyle\C{k-j(s+1)+n-1}{n-1}$ & $\lfloor k/(s+1)\rfloor$ \\[3ex]
Belbachir--Bouroubi--Khelladi \cite[Th.~2.1]{BelbachirBouroubi2008}
 & $\displaystyle\C{k-j(s+1)+n-1}{n-1}$ & $\lfloor k/(s+1)\rfloor$ \\[3ex]
Equation~\eqref{eq:11} above
 & $\displaystyle\C{k-j(s+1)+n-1}{n-1}$ & $n-1$ \\
\bottomrule
\end{tabular}
\end{center}

\medskip

Riordan obtains his binomial coefficient from the symmetry $\C{u}{v}=\C{u}{u-v}$ for non-negative arguments. Terms whose lower argument is negative vanish under the elementary convention, so his form requires no explicit cutoff. West \cite{West2021} and Charalambides \cite{Charalambides2002} sum through $j=n$, giving the full generating-function expansion, which is valid for every $k \geq 0$. When $0 \leq k \leq ns$, the $j=n$ term vanishes and all the forms in the table agree. Thus the cutoff $n-1$ in \eqref{eq:11} is valid only on the admissible range. Charalambides gives the support-sharp cutoff in his 1982 paper \cite{Charalambides1982} and the globally valid but generally unsharp cutoff $n$ in his 2002 textbook \cite{Charalambides2002}.
\end{remark}

\begin{proposition}[Exact summation bounds]
\label{prop:bound}
For every $k \geq 0$, the nonzero terms in the full generating-function expansion are exactly those with
\begin{equation}
0 \leq j \leq \min\bigg\{n,\bigg\lfloor\frac{k}{s+1}\bigg\rfloor\bigg\}.
\end{equation}
Consequently, the globally valid coefficient formula is
\begin{equation}
\label{eq:globalsharp}
\bis{n}{k}{s} = \sum_{j=0}^{\min\{n,\lfloor k/(s+1)\rfloor\}}
(-1)^{j}{\mku}\C{n}{j}{\mku}\C{k-j(s+1)+n-1}{n-1},
\end{equation}
where the left-hand side is zero for $k > ns$. On the admissible range $0 \leq k \leq ns$, the minimum is unnecessary; Belbachir, Bouroubi, and Khelladi \cite[Theorem~2.1]{BelbachirBouroubi2008} and Charalambides \cite[eq.~(2.8)]{Charalambides1982} give the sharp upper limit for nonzero summands $\lfloor k/(s+1)\rfloor$.
\end{proposition}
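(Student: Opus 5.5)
We extract the coefficient of $x^{k}$ from the factorization \eqref{eq:gfproof}. The Cauchy product gives
\[
\bis{n}{k}{s}=\sum_{j=0}^{n}(-1)^{j}\C{n}{j}\,c_{k-j(s+1)},\qquad c_{i}=\C{i+n-1}{n-1}\ (i\geq 0),\quad c_{i}=0\ (i<0),
\]
where $c_{i}$ is the coefficient of $x^{i}$ in $(1-x)^{-n}$. This holds for every $k\geq 0$, since $(1+x+\cdots+x^{s})^{n}$ is a polynomial whose coefficients beyond $x^{ns}$ are zero by \eqref{eq:bisdef}. Hence the left-hand side is automatically $0$ for $k>ns$. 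The real content of the proposition is therefore to determine exactly which of these $n+1$ summands are nonzero.

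The $j$th summand is a product of two factors.

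\emph{The first factor.} $\C{n}{j}$ is nonzero precisely for $0\leq j\leq n$. This is guaranteed by the range of the outer sum, so it contributes the bound $j\leq n$.

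\emph{The second factor.} Set $i=k-j(s+1)$.
\begin{itemize}
\item If $i<0$, then $c_{i}=0$, because $(1-x)^{-n}$ has no negative powers. Under the elementary convention \eqref{eq:elementary} this agrees with $\C{i+n-1}{n-1}=0$, since the upper argument $i+n-1$ is less than $n-1$. (This uses $n\geq 1$; the case $n=0$ is trivial.)
\item If $i\geq 0$, then $c_{i}=\C{i+n-1}{n-1}\geq 1$.
\end{itemize}
So the second factor is nonzero if and only if $j(s+1)\leq k$. Since $j$ is an integer and $s+1\geq 1$, this is equivalent to $j\leq\lfloor k/(s+1)\rfloor$.

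Both factors are nonzero exactly when $0\leq j\leq\min\{n,\lfloor k/(s+1)\rfloor\}$. Truncating the sum at this point discards only zero terms, which gives \eqref{eq:globalsharp}.

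For the final claim, assume $0\leq k\leq ns$. Then $\lfloor k/(s+1)\rfloor\leq ns/(s+1)<n$, so the minimum equals $\lfloor k/(s+1)\rfloor$. This recovers \eqref{eq:BBK21} and shows that it agrees with \eqref{eq:11} on this range. The agreement holds because every term with $\lfloor k/(s+1)\rfloor<j\leq n-1$ vanishes, as observed in Remark~\ref{rem:presentations}.

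The only delicate step is the convention check in the second factor. We must confirm that the formal coefficient of $(1-x)^{-n}$ at a negative index and the elementary binomial $\C{i+n-1}{n-1}$ both vanish for $i<0$. This includes the borderline cases $-n<i<0$, where the upper argument $i+n-1$ is non-negative but smaller than $n-1$. It is exactly here that the polynomial convention would differ, which is the issue taken up in Section~\ref{sec:conventions}. The remaining steps are bookkeeping.
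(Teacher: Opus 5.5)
Your proof is correct and follows the paper's argument: you extract $[x^{k}]$ from \eqref{eq:gfproof}, determine separately when $\C{n}{j}$ and the second factor vanish, and use $k/(s+1)\leq ns/(s+1)<n$ on the support, while your comparison of the formal coefficient $c_{i}$ with the elementary binomial makes explicit a step the paper leaves implicit. Two side remarks are inaccurate, though neither affects the argument: for $-n<i<0$ the polynomial convention also gives $\C{i+n-1}{n-1}=0$ (the falling factorial contains a zero factor), so the conventions differ only when $i\leq -n$, not in your borderline cases; and $n=0$ is not trivial but is excluded by the standing hypothesis $n>1$, because there $\C{k-1}{-1}=0$ identically and the formula fails at $k=0$.
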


\begin{proof}
The factor $\C{n}{j}$ is nonzero exactly for $0 \leq j \leq n$, whereas the second binomial coefficient is nonzero exactly when $j \leq k/(s+1)$. A summand is nonzero precisely when both conditions hold, which gives the range stated in the proposition. Identity~\eqref{eq:globalsharp} then follows by coefficient extraction from \eqref{eq:gfproof}. If $0 \leq k \leq ns$, then $k/(s+1) \leq ns/(s+1) < n$, so the minimum reduces to $\lfloor k/(s+1)\rfloor$ and every displayed term is nonzero.

\end{proof}

\begin{example}[Beyond the support]
\label{ex:off-support}
The distinction matters off the admissible range. For $s=1$, $n=2$, and $k=4$, the globally sharp cutoff is $2$ and
\begin{equation}
\C{2}{0}\C{5}{1}-\C{2}{1}\C{3}{1}+\C{2}{2}\C{1}{1} = 5-6+1 = 0,
\end{equation}
whereas truncating at $j=n-1=1$ gives $-1$.
\end{example}

Table~\ref{tab:bisnomial-triangles} displays the first rows of the coefficient triangles for $s=2,3$; the coefficient triangle for $s=1$ is just the usual binomial (Pascal) triangle. Read by rows, with $n \geq 0$ and $0 \leq k \leq ns$, these arrays are catalogued in the On-Line Encyclopedia of Integer Sequences (OEIS) as sequences \href{https://oeis.org/A027907}{A027907} and \href{https://oeis.org/A008287}{A008287}, respectively; the corresponding array for $s=4$ is \href{https://oeis.org/A035343}{A035343}.

\begin{table}[H]
\centering
\caption{The \bisnom{} coefficient triangles for $s=2,3$ and $0 \leq n \leq 6$. Each row indexed by $n$ lists $\bis{n}{k}{s}$; blank entries are zero. The central coefficient(s) at $k=\lfloor ns/2\rfloor$ and $k=\lceil ns/2\rceil$ are in bold, and the last column gives the row sum $(s+1)^n$.} 
\label{tab:bisnomial-triangles}
\medskip
\begingroup
\small
\setlength{\tabcolsep}{1.5pt}
\textbf{(a) $s=2$: trinomial coefficients} \\[3pt]
\begin{tabular}{@{}c@{\hspace{1em}}*{13}{>{\centering\arraybackslash}p{2em}}@{\hspace{1em}}c@{}}
\toprule
$n\backslash k$ & 0 & 1 & 2 & 3 & 4 & 5 & 6 & 7 & 8 & 9 & 10 & 11 & 12 & $(s+1)^n$ \\
\midrule
0 & \textbf{1} &  &  &  &  &  &  &  &  &  &  &  &  & 1 \\
1 & 1 & \textbf{1} & 1 &  &  &  &  &  &  &  &  &  &  & 3 \\
2 & 1 & 2 & \textbf{3} & 2 & 1 &  &  &  &  &  &  &  &  & 9 \\
3 & 1 & 3 & 6 & \textbf{7} & 6 & 3 & 1 &  &  &  &  &  &  & 27 \\
4 & 1 & 4 & 10 & 16 & \textbf{19} & 16 & 10 & 4 & 1 &  &  &  &  & 81 \\
5 & 1 & 5 & 15 & 30 & 45 & \textbf{51} & 45 & 30 & 15 & 5 & 1 &  &  & 243 \\
6 & 1 & 6 & 21 & 50 & 90 & 126 & \textbf{141} & 126 & 90 & 50 & 21 & 6 & 1 & 729 \\
\bottomrule
\end{tabular}

\bigskip

\textbf{(b) $s=3$: quadrinomial coefficients} \\[3pt]
\begin{tabular}{@{}c@{\hspace{1em}}*{19}{>{\centering\arraybackslash}p{2em}}@{\hspace{1em}}c@{}}
\toprule
$n\backslash k$ & 0 & 1 & 2 & 3 & 4 & 5 & 6 & 7 & 8 & 9 & 10 & 11 & 12 & 13 & 14 & 15 & 16 & 17 & 18 & $(s+1)^n$ \\
\midrule
0 & \textbf{1} &  &  &  &  &  &  &  &  &  &  &  &  &  &  &  &  &  &  & 1 \\
1 & 1 & \textbf{1} & \textbf{1} & 1 &  &  &  &  &  &  &  &  &  &  &  &  &  &  &  & 4 \\
2 & 1 & 2 & 3 & \textbf{4} & 3 & 2 & 1 &  &  &  &  &  &  &  &  &  &  &  &  & 16 \\
3 & 1 & 3 & 6 & 10 & \textbf{12} & \textbf{12} & 10 & 6 & 3 & 1 &  &  &  &  &  &  &  &  &  & 64 \\
4 & 1 & 4 & 10 & 20 & 31 & 40 & \textbf{44} & 40 & 31 & 20 & 10 & 4 & 1 &  &  &  &  &  &  & 256 \\
5 & 1 & 5 & 15 & 35 & 65 & 101 & 135 & \textbf{155} & \textbf{155} & 135 & 101 & 65 & 35 & 15 & 5 & 1 &  &  &  & 1024 \\
6 & 1 & 6 & 21 & 56 & 120 & 216 & 336 & 456 & 546 & \textbf{580} & 546 & 456 & 336 & 216 & 120 & 56 & 21 & 6 & 1 & 4096 \\
\bottomrule
\end{tabular}
\endgroup
\end{table}


\section{Symmetry, boundary collapse, and figurate numbers}
\label{sec:sym}

\subsection{The bijection and the symmetry relation}

Given a solution of Problem~\eqref{eq:problem}, set $b_{i} = s_{i}-k_{i}$. Then $0 \leq b_{i} \leq s_{i}$ for every $i$, and
\begin{equation}
\label{eq:15}
b_{1}+\cdots+b_{n} = \sum_{i=1}^{n}s_{i}-k = N-k.
\end{equation}
Conversely, $k_{i} = s_{i}-b_{i}$ recovers the original solution. Thus coordinatewise complementation replaces selected copies by omitted copies and gives a bijection between solutions of total $k$ and solutions of total $N-k$. In the homogeneous case, it proves the \emph{symmetry relation}
\begin{equation}
\label{eq:symmetry}
\bis{n}{k}{s} = \bis{n}{ns-k}{s}.
\end{equation}
Belbachir, Bouroubi, and Khelladi \cite[eq.~(1.3)]{BelbachirBouroubi2008} and Belbachir and Igueroufa \cite[eq.~(4)]{BelbachirIgueroufa2020b} record this identity.

The stars-and-bars formula gives $\C{N-k+n-1}{n-1}$ unrestricted non-negative solutions in the variables $b_{i}$. The upper bounds are automatic precisely in the following range.

\begin{proposition}
\label{prop:iff}
For $0 \leq k \leq N$, the identity $R_{\bm{s}}(k) = \C{N-k+n-1}{n-1}$ holds if and only if $N-k \leq \min(s_{1},\dots,s_{n})$.
\end{proposition}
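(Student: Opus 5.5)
We will work through the complement bijection of \eqref{eq:15}, which identifies $R_{\bm{s}}(k)$ with the number of solutions of $b_{1}+\cdots+b_{n}=N-k$ with $0 \leq b_{i} \leq s_{i}$. Write $T \deq \C{N-k+n-1}{n-1}$ for the number of unrestricted non-negative solutions in the $b_{i}$, given by \eqref{eq:stars}. Every bounded solution is an unrestricted one, so $R_{\bm{s}}(k) \leq T$ always. Equality holds exactly when no unrestricted solution violates a bound, that is, when the union $\bigcup_{i} A_{i}'$ is empty. Here $A_{i}'$ is the set of unrestricted solutions with $b_{i} \geq s_{i}+1$, the analogue of $A_{i}$ from Section~\ref{sec:pie} with $k$ replaced by $N-k$.

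\emph{Sufficiency.} Suppose $N-k \leq \min_{i} s_{i}$. Then any non-negative solution satisfies $b_{i} \leq b_{1}+\cdots+b_{n} = N-k \leq s_{i}$ for every $i$. Hence each $A_{i}'$ is empty and $R_{\bm{s}}(k)=T$. Equivalently, in \eqref{eq:9} applied at total $N-k$, every term with $j \geq 1$ has upper argument $N-k-(s_{i}+1)+n-1 < n-1$ and vanishes under the elementary convention \eqref{eq:elementary}.

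\emph{Necessity.} Suppose instead $N-k > s_{i_{0}}$ for some index $i_{0}$, so $N-k \geq s_{i_{0}}+1$. We exhibit an unrestricted solution violating the bound. Take $b_{i_{0}} = N-k$ and $b_{i}=0$ for $i \neq i_{0}$. This lies in $A_{i_{0}}'$, so the inequality $R_{\bm{s}}(k) \leq T$ is strict, because the bounded solutions form a proper subset of the unrestricted ones. Alternatively, $|A_{i_{0}}'| = \C{N-k-s_{i_{0}}-1+n-1}{n-1} \geq 1$.

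The only point needing care is this necessity direction. There one must argue via strict set inclusion rather than through the alternating sum \eqref{eq:9}, whose higher-order terms could in principle cancel. The explicit witness avoids that issue entirely. Since $k \leq N$, the total $N-k$ is non-negative, so all counts are well defined, and the degenerate case $n=1$ is covered by the same argument.
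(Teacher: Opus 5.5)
Your proof is correct and follows essentially the same route as the paper's: the complement bijection reduces the question to whether every unrestricted solution of $b_{1}+\cdots+b_{n}=N-k$ automatically satisfies $b_{i}\leq s_{i}$. Sufficiency comes from $b_{i}\leq N-k\leq s_{i}$, and necessity from the explicit inadmissible witness $b_{i_{0}}=N-k$, $b_{l}=0$ for $l\neq i_{0}$.
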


\begin{proof}
By the complement bijection, it remains to determine when every non-negative solution of $b_{1}+\cdots+b_{n} = N-k$ automatically satisfies $b_{i} \leq s_{i}$. If $N-k \leq \min(s_{1},\dots,s_{n})$, then $b_{i} \leq N-k \leq s_{i}$ for every $i$. Conversely, choose $i^{*}$ such that $s_{i^{*}} = \min(s_{1},\dots,s_{n})$. If $N-k > s_{i^{*}}$, the unrestricted solution $b_{i^{*}} = N-k$, $b_{l} = 0$ for $l \neq i^{*}$ is inadmissible, so the stars-and-bars formula strictly overcounts.
\end{proof}

\subsection{Figurate numbers}

When $s_{1}=\cdots=s_{n}=s$, so that $N = ns$, put $c = ns-k$. For $0 \leq c \leq s$, Proposition~\ref{prop:iff} gives
\begin{equation}
\label{eq:19}
\bis{n}{ns-c}{s} = \C{c+n-1}{n-1} = \C{c+n-1}{c},
\end{equation}
where the second equality follows from the symmetry of the ordinary binomial coefficient. For fixed $c$, the numbers $\C{c+n-1}{c}$ form the $c$-dimensional simplex-number sequence --- triangular for $c=2$, tetrahedral for $c=3$, and so on. In the binomial (Pascal) coefficient triangle these figurate sequences lie on diagonals. Left-aligning the rows turns these diagonal figurate sequences into columns, as Table~\ref{tab:pascal-figurate} illustrates. Thus equation~\eqref{eq:19} identifies the coefficient at distance $c$ from the upper edge of the support with the corresponding figurate number.

\begin{table}[H]
\centering
\caption{A segment of the left-aligned binomial coefficient triangle. The entries below the diagonal label $c$ form the sequence $\C{c+n-1}{c}$, $n \geq 1$; in particular, the columns $c = 2$ and $c = 3$ contain the triangular and tetrahedral numbers.}
\label{tab:pascal-figurate}
\medskip
\setlength{\tabcolsep}{4pt}
\renewcommand{\arraystretch}{1.0}
\begin{tabular}{@{}cccccccc@{}}
\underline{$c=0$} & & & & & & & \\
$1$ & \underline{$c=1$} & & & & & & \\
$1$ & $1$ & \underline{$c=2$} & & & & & \\
$1$ & $2$ & $1$ & \underline{$c=3$} & & & & \\
$1$ & $3$ & $3$ & $1$ & \underline{$c=4$} & & & \\
$1$ & $4$ & $6$ & $4$ & $1$ & \underline{$c=5$} & & \\
$1$ & $5$ & $10$ & $10$ & $5$ & $1$ & \underline{$c=6$} & \\
$1$ & $6$ & $15$ & $20$ & $15$ & $6$ & $1$ & \\
$\vdots$ & $\vdots$ & $\vdots$ & $\vdots$ & $\vdots$ & $\vdots$ & $\vdots$ & $\ddots$ \\
\end{tabular}
\end{table}

\subsection{The boundary collapse, three times over}

Formula~\eqref{eq:11} collapses to a single binomial coefficient at either end of the admissible range of $k$. In the lower regime this is immediate from the elementary convention. For $0 \leq k \leq s$ and any $j \geq 1$ we have $j(s+1) \geq s+1 > k$, so $k-j(s+1)+n-1 < n-1$ and the corresponding binomial vanishes by~\eqref{eq:elementary}. Only the $j=0$ term survives, giving
\begin{equation}
\label{eq:22}
\sum_{j=0}^{n-1}(-1)^{j}{\mku}\C{n}{j}{\mku}\C{k-j(s+1)+n-1}{n-1} = \C{k+n-1}{n-1}, \quad 0 \leq k \leq s.
\end{equation}
In \bisnom{} notation, equation~\eqref{eq:22} says that $\bis{n}{k}{s} = \C{n+k-1}{k}$ for $0 \leq k \leq s$.

Belbachir, Bouroubi, and Khelladi \cite[Corollary~4.2]{BelbachirBouroubi2008} give~\eqref{eq:22}; Charalambides \cite[Example~4.7]{Charalambides2002} records the same boundary case. The Bell-polynomial and complement-bijection arguments below emphasize different aspects of the identity.

\paragraph{Bell-polynomial proof}
Let $B_{\nu,n}(t_{1},t_{2},\dots)$ be the exponential partial Bell polynomials, defined by
\begin{equation}
\label{eq:belldef}
\frac{1}{n!}\bigg(\sum_{\ell \geq 1}\frac{t_{\ell}}{\ell!}x^{\ell}\bigg)^{n} = \sum_{\nu \geq n}B_{\nu,n}\frac{x^{\nu}}{\nu!}, \quad n = 0,1,2,\dots
\end{equation}
Taking $t_{\ell}=\ell!$ for $1 \leq \ell \leq s+1$ and $t_{\ell}=0$ otherwise turns the inner sum in~\eqref{eq:belldef} into $x+x^{2}+\cdots+x^{s+1} = x(1+x+\cdots+x^{s})$, and comparing coefficients of $x^{\nu}$ yields the connection formula
\cite[Theorem~4.1]{BelbachirBouroubi2008}
\begin{equation}
\label{eq:bellconn}
B_{\nu,n}\big(1!,2!,\dots,(s+1)!,0,\dots\big) = \frac{\nu!}{n!}\mku\bis{n}{\nu-n}{s}.
\end{equation}
With no truncation, the inner sum is $x/(1-x)$, and the same comparison gives, for $\nu \geq n \geq 1$, the classical evaluation by Lah numbers,
\begin{equation}
B_{\nu,n}(1!,2!,3!,\dots) = \frac{\nu!}{n!}\C{\nu-1}{\nu-n}.
\end{equation}
The coefficient of $x^{\nu}$ in~\eqref{eq:belldef} involves only the products $t_{\ell_{1}}\cdots t_{\ell_{n}}$ with $\ell_{1}+\cdots+\ell_{n} = \nu$ and every $\ell_{i} \geq 1$, so that $\ell_{i} \leq \nu-n+1$. Consequently, truncating the Bell-polynomial arguments after $(s+1)!$ has no effect when $s+1 \geq \nu-n+1$. For $s \geq \nu-n$, the two evaluations therefore agree and $\bis{n}{\nu-n}{s} = \C{\nu-1}{\nu-n}$, which is~\eqref{eq:22} on setting $k = \nu-n$.

\paragraph{Bijection proof}
Apply Proposition~\ref{prop:iff} to the complementary total $ns-k$. Since $ns-(ns-k)=k \leq s$, the proposition gives $\bis{n}{ns-k}{s} = \C{k+n-1}{n-1}$; the symmetry relation~\eqref{eq:symmetry} then yields~\eqref{eq:22}.

The symmetry relation reflects the lower boundary onto the upper boundary. Indeed, $ns-k \in [0,s]$ precisely when $k \in [(n-1)s,ns]$. Applying~\eqref{eq:22} at $ns-k$ therefore gives $\bis{n}{k}{s} = \C{ns-k+n-1}{n-1}$, that is,
\begin{equation}
\label{eq:24}
\sum_{j=0}^{n-1}(-1)^{j}{\mku}\C{n}{j}{\mku}\C{k-j(s+1)+n-1}{n-1} = \C{ns-k+n-1}{n-1}, \quad (n-1)s \leq k \leq ns.
\end{equation}
In~\eqref{eq:22}, every term but one vanishes individually; in \eqref{eq:24}, several terms cancel to leave the single figurate number on the right.

\begin{example}[Three dice near the upper boundary]
\label{ex:dice-boundary}
For three ordinary dice totaling $16$, subtracting one from each face value gives $(n,s,k)=(3,5,13)$. The support-sharp inclusion--exclusion formula yields
\[
\bis{3}{13}{5}=\C{15}{2}-3\C{9}{2}+3\C{3}{2}=105-108+9=6.
\]
Complement symmetry replaces $k=13$ by $15-13=2$, where the lower-boundary formula gives $\bis{3}{2}{5}=\C{4}{2}=6$ directly. The six dice outcomes are the three permutations of $(6,6,4)$ and the three permutations of $(6,5,5)$. On the original dice, complementation sends each face value $d_{i}$ to $7-d_{i}$, pairing totals $16$ and $5$; each total has probability $6/6^{3}=1/36$.
\end{example}

\subsection{Two consequences of the symmetry relation}

At $k = ns+1$, no bounded solution exists. The globally valid generating-function expansion obtained from~\eqref{eq:gfproof} therefore sums to zero. For $n \geq 2$, its $j=n$ term is $(-1)^{n}\C{n}{n}\C{0}{n-1} = 0$, so the sum may be written through $j=n-1$:
\begin{equation}
\label{eq:13}
\sum_{j=0}^{n-1}(-1)^{j}{\mku}\C{n}{j}{\mku}\C{(n-j)(s+1)}{n-1} = 0,
\end{equation}
independently of $s$. At the nearby admissible value $k = ns-1$, for $s \geq 1$, exactly $n$ solutions remain --- one for each choice of which $k_{i}$ drops to $s-1$ while the rest stay at $s$ --- and hence
\begin{equation}
\label{eq:14}
\sum_{j=0}^{n-1}(-1)^{j}{\mku}\C{n}{j}{\mku}\C{(n-j)(s+1)-2}{n-1} = n,
\end{equation}
again independently of $s$. West \cite[Example~4.1.6]{West2021} derives both identities by the same inclusion--exclusion argument.

\begin{remark}
\label{rem:sym}
With the coefficient array extended by zero outside its support as in~\eqref{eq:bisdef}, the symmetry relation holds for every integer $k$. It gives $\bis{n}{ns+1}{s} = \bis{n}{-1}{s} = 0$ and, for $s \geq 1$, $\bis{n}{ns-1}{s} = \bis{n}{1}{s} = n$. Thus equations~\eqref{eq:13} and~\eqref{eq:14} are the alternating-sum forms of these two elementary values. Each left-hand side is an alternating sum of $n$ degree-$(n-1)$ polynomials in $s$, yet its value is the constant $0$ or $n$. Equation~\eqref{eq:24} similarly results from composing~\eqref{eq:22} with the symmetry relation.
\end{remark}

Complement symmetry also shortens the support-sharp inclusion--exclusion sum.

\begin{corollary}
\label{cor:sharpsym}
For $0 \leq k \leq ns$, let $k^{*} = \min(k,ns-k)$ denote the distance from $k$ to the nearer endpoint of the support. Then
\begin{equation}
\label{eq:sharpsym}
\bis{n}{k}{s} = \sum_{j=0}^{\lfloor k^{*}/(s+1)\rfloor}(-1)^{j}{\mku}\C{n}{j}{\mku}\C{k^{*}-j(s+1)+n-1}{n-1},
\end{equation}
a sum of $\lfloor k^{*}/(s+1)\rfloor+1$ terms.
\end{corollary}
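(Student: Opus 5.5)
The plan is to combine the symmetry relation \eqref{eq:symmetry} with the support-sharp formula of Proposition~\ref{prop:bound}. Fix $0 \leq k \leq ns$ and put $k^{*} = \min(k,ns-k)$. Then $k^{*} \in \{k, ns-k\}$, and in either case \eqref{eq:symmetry} gives $\bis{n}{k}{s} = \bis{n}{k^{*}}{s}$. If $k^{*}=k$ this is trivial. If $k^{*}=ns-k$, it is exactly the complement bijection of Section~\ref{sec:sym} applied with all $s_{i}=s$.

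Next, $k^{*}$ lies in the admissible range. Since $0 \leq k^{*} \leq ns/2 \leq ns$, Proposition~\ref{prop:bound} applies at $k^{*}$, and the minimum in \eqref{eq:globalsharp} reduces to $\lfloor k^{*}/(s+1)\rfloor$. Substituting $k^{*}$ for $k$ in \eqref{eq:globalsharp} yields \eqref{eq:sharpsym} immediately.

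For the term count, the index $j$ runs over $0,1,\dots,\lfloor k^{*}/(s+1)\rfloor$, which gives $\lfloor k^{*}/(s+1)\rfloor+1$ terms. By the last sentence of the proof of Proposition~\ref{prop:bound}, each displayed term is nonzero, since $k^{*}$ is admissible.

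There is no real obstacle. The only point to check is the degenerate case $s=0$. There $ns=0$ forces $k=k^{*}=0$, and the formula returns $\C{n-1}{n-1}=1$, which is correct for $n\geq1$. One should also confirm that the case $n=1$ causes no trouble with the convention \eqref{eq:elementary}. This is immediate, because for $0\leq k^{*}$ the only surviving term is $j=0$, which equals $\C{k^{*}}{0}=1$.
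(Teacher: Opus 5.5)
Your proposal is correct and follows the paper's own proof, which applies the support-sharp formula \eqref{eq:BBK21} at $k^{*}$ and then invokes the symmetry relation \eqref{eq:symmetry}. Going through Proposition~\ref{prop:bound} amounts to the same step, since on the admissible range its minimum reduces to $\lfloor k^{*}/(s+1)\rfloor$. Your checks of the degenerate cases $s=0$ and $n=1$ are sound.
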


\begin{proof}
Apply~\eqref{eq:BBK21} at $k^{*}$ and use~\eqref{eq:symmetry}.
\end{proof}

Since $k^{*} \leq ns/2$, the number of terms in \eqref{eq:sharpsym} never exceeds $\lfloor ns/2(s+1)\rfloor+1$, roughly half the maximum number of terms in \eqref{eq:BBK21}. At $k = ns$, equation~\eqref{eq:sharpsym} has one term, whereas the direct support-sharp form has $\lfloor ns/(s+1)\rfloor+1$ terms. Finally, $\lfloor k^{*}/(s+1)\rfloor = 0$ exactly when $k^{*} \leq s$. This condition describes the lower range $0 \leq k \leq s$ and its reflection $(n-1)s \leq k \leq ns$, yielding~\eqref{eq:22} and~\eqref{eq:24}.

\begin{example}[Shorter sums through complementation]
\label{ex:shorter-sums}
For $(n,s)=(12,3)$, the support is $0\leq k\leq36$. Evaluating each of the $37$ coefficients separately by the direct support-sharp formula~\eqref{eq:BBK21} requires $190$ summands, whereas using the complementary total in~\eqref{eq:sharpsym} requires $105$:
\[
\sum_{k=0}^{36}\bigg(\bigg\lfloor\frac{k}{4}\bigg\rfloor+1\bigg) = 190,
\qquad
\sum_{k=0}^{36}\bigg(\bigg\lfloor\frac{\min(k,36-k)}{4}\bigg\rfloor+1\bigg) = 105.
\]
\end{example}

\subsection{The inhomogeneous boundary regimes}

Let $s_{\min} = \min(s_{1},\dots,s_{n})$. For arbitrary bounds, every solution in the lower range $0 \leq k \leq s_{\min}$ satisfies $k_{i} \leq k \leq s_{i}$, so that
\begin{equation}
\label{eq:27}
R_{\bm{s}}(k) = \C{k+n-1}{n-1}.
\end{equation}
In the upper range $N-s_{\min} \leq k \leq N$, the complement bijection gives
\begin{equation}
\label{eq:28}
R_{\bm{s}}(k) = \C{N-k+n-1}{n-1}.
\end{equation}
The complement map preserves the middle range $s_{\min} < k < N-s_{\min}$, but neither the bounds $k_{i} \leq s_{i}$ nor the complementary bounds $b_{i} \leq s_{i}$ are automatic there. By Proposition~\ref{prop:iff}, and by the same argument applied to the $k_{i}$ themselves, each one-term formula holds exactly on its own boundary range, so neither holds in the middle range. Equating \eqref{eq:27}--\eqref{eq:28} with \eqref{eq:9} produces inhomogeneous analogues of \eqref{eq:22} and \eqref{eq:24}. The alternating sums no longer share the factor $\C{n}{j}$, but the boundary values remain simplex, or figurate, numbers.


\section{A probabilistic interpretation}
\label{sec:prob}

Let $X_{1},\dots,X_{n}$ be independent and uniform on $\{0,1,\dots,s\}$, and write $U_{s}=\frac{1}{s+1}(\delta_{0}+\cdots+\delta_{s})$ for their common distribution. Set $S_{\!n}=X_{1}+\cdots+X_{n}$. Normalizing the coefficients in~\eqref{eq:bisdef} gives the probability mass function of $S_{\!n}$:
\begin{equation}
\label{eq:density}
\PP(S_{\!n}=k) = \frac{\displaystyle\bis{n}{k}{s}}{(s+1)^{n}}.
\end{equation}
Belbachir, Bouroubi, and Khelladi \cite[Corollary~5.1]{BelbachirBouroubi2008} give~\eqref{eq:density}, which recovers de Moivre's dice model in probabilistic form. The involution $X_{i} \mapsto s-X_{i}$ sends $S_{\!n}$ to $ns-S_{\!n}$, so the distribution is symmetric about $ns/2$, in agreement with~\eqref{eq:symmetry}.

\subsection{Moments}

Belbachir, Bouroubi, and Khelladi \cite[Corollary~5.2]{BelbachirBouroubi2008} evaluate the moments of $S_{\!n}$; the mean and variance follow directly from independence. A single draw from $U_{s}$ has mean $s/2$ and variance $\big((s+1)^{2}-1\big)/12$, so
\begin{equation}
\label{eq:moments}
\EE(S_{\!n}) = \frac{ns}{2},
\quad
\operatorname{Var}(S_{\!n}) = \frac{n{\mku}s(s+2)}{12} \eqd \tau_{n,s}^{2}.
\end{equation}
Symmetry about $ns/2$ makes every odd central moment vanish. For $s=1$, the variance becomes the binomial value $n/4$. Under the spin substitution $s=2\sigma$ of Section~\ref{sec:spin}, it becomes $\tau_{n,2\sigma}^{2} = n\sigma(\sigma+1)/3$, the familiar variance of the total magnetization of $n$ free spins.

\subsection{Mode and log-concavity}

For every $n \geq 1$ and $s \geq 0$, the coefficient sequence $k \mapsto \bis{n}{k}{s}$ is the $n$-fold convolution of $(1,1,\dots,1)$. This base sequence is log-concave with no internal zeros, and convolution preserves these properties \cite{BelbachirSzalay2008}. Hence the coefficient sequence is log-concave and unimodal. Together with symmetry, unimodality places a mode at the center. Belbachir \cite{Belbachir2011} gives the exact statement
\begin{equation}
\label{eq:mode}
\max_{0 \leq k \leq ns}\bis{n}{k}{s} = \bis{n}{\lfloor (ns+1)/2\rfloor}{s}.
\end{equation}
Deshouillers \cite{Deshouillers1974} attributes the location of this maximum at $\lfloor ns/2\rfloor$ to G.~L\'evy's 1971 thesis. For $n \geq 2$ and $s \geq 1$, the central mode is unique when $ns$ is even and there are exactly two central modes when $ns$ is odd. For $n=1$, all $s+1$ coefficients equal $1$; for $s=0$, the only coefficient is at $k=0$.

\subsection{Normal approximation and the central coefficients}
\label{sec:llt}

For fixed $s \geq 1$, the distribution $U_{s}$ has lattice span $1$, since its support contains consecutive integers, and moments of every order, since that support is bounded. Maximal span is precisely the condition for the uniform lattice local limit theorem \cite[Ch.~VII, Theorem~1]{Petrov1975}, and the third moment yields a version with an explicit rate \cite[Ch.~VII, Theorem~6]{Petrov1975}. With $\varphi$ denoting the standard normal density and $\tau_{n,s}$ defined in~\eqref{eq:moments},
\begin{equation}
\label{eq:llt}
\frac{\displaystyle\bis{n}{k}{s}}{(s+1)^{n}} = \frac{1}{\tau_{n,s}}\,\varphi\bigg(\frac{k-ns/2}{\tau_{n,s}}\bigg) + O(\tau_{n,s}^{-2}),
\end{equation}
uniformly in $k$ as $n\to\infty$.

For $k = \lfloor ns/2\rfloor$, the distance $|k-ns/2|$ is at most $1/2$, so the argument of $\varphi$ in~\eqref{eq:llt} is at most $1/2\tau_{n,s}$, and $\varphi$ may be replaced there by $\varphi(0) = 1/\sqrt{2\pi}$ at the cost of a relative error $O(\tau_{n,s}^{-2})$. The normalized central coefficient is therefore $\varphi(0)/\tau_{n,s}+O(\tau_{n,s}^{-2})$, with relative error $O(n^{-1/2})$, and substituting $\tau_{n,s}$ from~\eqref{eq:moments} gives
\begin{equation}
\label{eq:centralasym}
\bis{n}{\lfloor ns/2\rfloor}{s}
 \,\sim\,  (s+1)^{n}\sqrt{\frac{6}{\pi{\mku}n{\mku}s(s+2)}}.
\end{equation}
The central coefficient therefore accounts for a $\Theta(n^{-1/2})$ fraction of the total mass $(s+1)^{n}$. Since $\tau_{n,s} = \Theta(\sqrt{n})$ while the support has width $ns = \Theta(n)$, the mass concentrates on a vanishing fraction of the support. Equation~\eqref{eq:centralasym} also estimates the cardinality of the central rank layer used in Section~\ref{sec:hash}.

With $p=s+1$, so that $\bis{n}{k}{p-1}$ is the $p$-nomial coefficient of order $n$, equation~\eqref{eq:centralasym} becomes
\begin{equation}
\label{eq:pnomialmax}
\max_{k}\bis{n}{k}{p-1} \sim p^{n}\!\sqrt{\frac{6}{\pi{\mku}n\,(p^{2}-1)}}.
\end{equation}
The asymptotic formula~\eqref{eq:pnomialmax} was conjectured by G.~L\'evy and proved by Deshouillers \cite{Deshouillers1974}. Eger \cite{Eger2014} later recovered the same leading term by a lattice local-limit argument under the name \emph{central extended binomial coefficients}.

After division by $(s+1)^{n}$, the adjacent central difference is of order $\tau_{n,s}^{-3}$, whereas the uniform error in~\eqref{eq:llt} is only $O(\tau_{n,s}^{-2})$. Subtracting the approximation at two adjacent lattice points therefore does not control the difference. Petrov \cite[Ch.~VII, Theorem~13]{Petrov1975} refines the lattice local limit theorem to an asymptotic expansion with a uniform remainder of arbitrary order in $n^{-1/2}$, since the bounded support of $U_{s}$ guarantees the required moments. Neuschel \cite{Neuschel2014} gives a complete local expansion with uniform error terms; alternatively, one may use the midpoint-anchored saddle-point expansion of \cite{Mendonca2026}. Petrov's expansion through order $n^{-1}$ already suffices. With $x = (k-ns/2)/\tau_{n,s}$, and with $\kappa_{4} = -\big((s+1)^{4}-1\big)/120$ the fourth cumulant of $U_{s}$, it reads
\begin{equation}
\label{eq:edgeworth}
\tau_{n,s}\frac{\bis{n}{k}{s}}{(s+1)^{n}} = \varphi(x)\bigg[1+\frac{n\kappa_{4}}{24\tau_{n,s}^{4}}\big(x^{4}-6x^{2}+3\big)\bigg] + o(n^{-1}),
\end{equation}
uniformly in $k$. The term of order $n^{-1/2}$ is proportional to the third cumulant of $U_{s}$ and vanishes by symmetry. Had it survived, it would contribute to the adjacent difference at the same order as the Gaussian term. The correction that remains, $\varphi(x)$ times the $O(n^{-1})$ polynomial in the bracket, is even in $x$, so between $x=0$ and $x=1/\tau_{n,s}$, that is, between $k=ns/2$ and $k=ns/2+1$, it changes by only $O(n^{-1}\tau_{n,s}^{-2})$. The Gaussian term changes by
\begin{equation}
\varphi(0)-\varphi(1/\tau_{n,s}) = \varphi(0)\big(1-e^{-1/2\tau_{n,s}^{2}}\big) = \frac{\varphi(0)}{2\tau_{n,s}^{2}}+O(\tau_{n,s}^{-4}).
\end{equation}
Since $\tau_{n,s}^{2}$ is of order $n$, every other contribution is $o(\tau_{n,s}^{-2})$, and dividing by $\tau_{n,s}$ gives, for fixed $s$ and even $ns$,
\begin{equation}
\label{eq:centraldiffasym}
\bis{n}{ns/2}{s}-\bis{n}{ns/2+1}{s} \,\sim\, 
\frac{(s+1)^{n}}{2\!\sqrt{2\pi}\,\tau_{n,s}^{3}}, \quad n \to \infty.
\end{equation}
Thus the normalized central difference is $\Theta(n^{-3/2})$, one factor of $n^{-1}$ below the central probability mass, which is $\Theta(n^{-1/2})$. This extra factor is characteristic of the Catalan-type first differences considered below.

Under the spin dictionary of Section~\ref{sec:spin}, let $d^{\,(J)}_{n,\sigma}$ denote the multiplicity of the spin-$J$ irreducible representation in the $n$-fold tensor product of spin-$\sigma$ representations, as defined in~\eqref{eq:dJ}. In the central regime $J = O(\sqrt{n})$, these multiplicities have a Maxwell-type profile, a Gaussian in $J/\sqrt{n}$ multiplied by $2J+1$. Polychronakos and Sfetsos \cite{PolychronakosSfetsos2016} obtained this profile through random walks, while Curtright, Van Kortryk, and Zachos \cite{CurtrightVanKortryk2017} derived it from $\mathrm{SU}(2)$ characters.

The midpoint-anchored saddle-point expansion of \cite{Mendonca2026} recovers the central profile and extends it to a large-deviation asymptotic with relative error $O(n^{-1})$, uniformly for fixed $\sigma$ and any $\epsilon > 0$ throughout $0 \leq (J+\tfrac{1}{2})/n \leq \sigma-\epsilon$. Its central specialization recovers the leading fixed-$J$ form of Curtright, Van Kortryk, and Zachos \cite{CurtrightVanKortryk2017} and the large-$J$ central form of Polychronakos and Sfetsos \cite{PolychronakosSfetsos2016}. When $n\sigma \in \ZZ$, equation~\eqref{eq:centraldiffasym} with $s=2\sigma$ is its $J=0$ specialization and yields the singlet asymptotic in the following example.

\begin{example}
\label{ex:singlet}
Take $n=2\nu$ spin-$2$ particles. Then $s=4$, $\tau_{2\nu,4}^{2} = 2\nu \cdot 4 \cdot 6/12 = 4\nu$, and $2\sqrt{2\pi}\,\tau_{2\nu,4}^{3} = 16\sqrt{2\pi}\,\nu^{3/2}$, so equation~\eqref{eq:centraldiffasym} gives
\begin{equation}
d^{\,(0)}_{2\nu,2} \,\sim\, \frac{5^{2\nu}}{16\sqrt{2\pi}\,\nu^{3/2}}.
\end{equation}
This recovers the leading behavior of sequence \href{https://oeis.org/A007043}{A007043} in Table~\ref{tab:oeis}. Taking $\sigma=\tfrac{1}{2}$ instead, $\tau_{2\nu,1}^{2} = \nu/2$, and the same computation gives the classical Catalan asymptotic $4^{\nu}/\sqrt{\pi}\,\nu^{3/2}$.
\end{example}

\begin{corollary}[Central asymptotic corrections]
\label{cor:central-corrections}
For fixed $s \geq 1$ and even $ns$, as $n \to \infty$,
\begin{align}
\label{eq:centralcoeff-correction}
\bis{n}{ns/2}{s}
&= \frac{(s+1)^{n}}{\sqrt{2\pi}\,\tau_{n,s}}
\bigg[1-\frac{3((s+1)^{2}+1)}{20((s+1)^{2}-1)n}+O(n^{-2})\bigg], \\[1ex]
\label{eq:centraldiff-correction}
\bis{n}{ns/2}{s}-\bis{n}{ns/2+1}{s}
&= \frac{(s+1)^{n}}{2\sqrt{2\pi}\,\tau_{n,s}^{3}}
\bigg[1-\frac{3((s+1)^{2}+5)}{4((s+1)^{2}-1)n}+O(n^{-2})\bigg].
\end{align}
\end{corollary}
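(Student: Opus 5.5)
The plan is to read off both corrections from Petrov's lattice Edgeworth expansion, the expansion already invoked for \eqref{eq:edgeworth}, carried one order further. I will take \cite[Ch.~VII, Theorem~13]{Petrov1975} through order $n^{-2}$, with a uniform remainder $O(n^{-5/2})$ in $\tau_{n,s}\bis{n}{k}{s}/(s+1)^{n}$. This is legitimate because $U_{s}$ has bounded support and maximal span $1$.

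\textbf{Structural step.} In the Edgeworth series the coefficient of $n^{-r/2}$ is $\varphi(x)$ times a polynomial built from products of cumulants $\kappa_{j}$ whose index total has the parity of $r$. The order $n^{-1/2}$ term involves only $\kappa_{3}$. The order $n^{-3/2}$ term involves only $\kappa_{5}$ and $\kappa_{3}\kappa_{4}$ (and $\kappa_{3}^{3}$). All odd cumulants of $U_{s}$ vanish by the symmetry $X\mapsto s-X$, so both half-integer terms vanish identically, not merely at $x=0$. This is the point that must be checked carefully. If the $n^{-3/2}$ term merely vanished at $x=0$ and were odd in $x$, it would still shift the adjacent difference by a relative $O(n^{-1})$ and spoil \eqref{eq:centraldiff-correction}.

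The surviving expansion is therefore
\[
\tau_{n,s}\frac{\bis{n}{k}{s}}{(s+1)^{n}} = \varphi(x)\Big[1+\frac{c}{n}(x^{4}-6x^{2}+3)+\frac{1}{n^{2}}P_{2}(x)\Big]+O(n^{-5/2}),
\]
uniformly in $k$. Here $x=(k-ns/2)/\tau_{n,s}$, $P_{2}$ is an even polynomial, and $c=\kappa_{4}/(24\sigma_{s}^{4})$ with $\sigma_{s}^{2}=((s+1)^{2}-1)/12$ the single-draw variance.

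\textbf{Coefficient \eqref{eq:centralcoeff-correction}.} Put $p=s+1$ and take $k=ns/2$, so that $x=0$. The bracket becomes $1+3c/n+O(n^{-2})$. With $\kappa_{4}=-(p^{4}-1)/120$ and $\sigma_{s}^{4}=(p^{2}-1)^{2}/144$ this gives
\[
3c=-\frac{3(p^{2}+1)}{20(p^{2}-1)}.
\]
Dividing by $\tau_{n,s}$ then yields \eqref{eq:centralcoeff-correction}.

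\textbf{Difference \eqref{eq:centraldiff-correction}.} Subtract the expansion at $x=h\deq 1/\tau_{n,s}$ from the one at $x=0$. The $P_{2}$ term is even, so its change is $O(n^{-2}h^{2})=O(n^{-3})$. The remainder contributes $O(n^{-5/2})$. The leading difference is of order $h^{2}\sim n^{-1}$, so both are relatively $O(n^{-2})$ and $O(n^{-3/2})$ respectively. Expanding $\varphi(h)=\varphi(0)(1-h^{2}/2+h^{4}/8+\cdots)$ gives
\[
f(0)-f(h)=\frac{\varphi(0)h^{2}}{2}\Big[1+\frac{15c}{n}-\frac{h^{2}}{4}+O(n^{-2})\Big].
\]
Substituting $h^{2}=12/(n(p^{2}-1))$ and $15c=-3(p^{2}+1)/(4(p^{2}-1))$, the total $1/n$ coefficient is
\[
-\frac{3(p^{2}+1)+12}{4(p^{2}-1)}=-\frac{3(p^{2}+5)}{4(p^{2}-1)}.
\]
Dividing by $\tau_{n,s}$ gives \eqref{eq:centraldiff-correction}.

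\textbf{Main obstacle and remainder bookkeeping.} The main obstacle is justifying the $O(n^{-2})$ relative remainder in the difference. This requires Petrov's expansion to the stated order with a uniform remainder, together with the identical vanishing of the $n^{-3/2}$ term. The $O(n^{-5/2})$ remainder is then only relatively $O(n^{-3/2})$ against the $n^{-1}$ leading difference. To reach the full $O(n^{-2})$ I would push the expansion one half-order further: the $n^{-5/2}$ term is again odd-cumulant and vanishes identically, which leaves an $O(n^{-3})$ remainder. Alternatively, I would cite Neuschel's complete expansion \cite{Neuschel2014} or the midpoint-anchored saddle-point expansion \cite{Mendonca2026}, where the parity structure is explicit.
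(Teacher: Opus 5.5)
Your proposal is correct and follows essentially the paper's route. Both use a uniform lattice local expansion in which the half-integer orders vanish with the odd cumulants, and the surviving corrections are even, so their adjacent differences gain a factor $O(n^{-1})$, while the uniform remainder is amplified by a factor of order $n$ in the difference; your relative corrections $3c/n$ and $15c/n-h^{2}/4$ coincide with the paper's $\lambda_{4}/8n$ and $5\lambda_{4}/8n-1/4\tau_{n,s}^{2}$.

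The one point to state precisely is where the $O(n^{-3})$ uniform remainder comes from: you must carry the expansion through order $n^{-3}$, as the paper does with Neuschel's expansion (Petrov's Theorem~13 with eight moments works equally well), because stopping at the vanishing $n^{-5/2}$ term by itself only yields $o(n^{-5/2})$.
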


\begin{proof}
Apply Neuschel's uniform local expansion \cite{Neuschel2014} through order $n^{-3}$. Its correction terms are smooth and even, so their differences between $x=0$ and $x=1/\tau_{n,s}$ gain a factor $O(n^{-1})$; the uniform remainder is $o(n^{-3})$. The leading difference of the rescaled probabilities in~\eqref{eq:edgeworth} is of order $n^{-1}$, so passing to a relative remainder amplifies the uniform remainder by a factor of order $n$. With $\kappa_{2}=s(s+2)/12$ and $\lambda_{4}=\kappa_{4}/\kappa_{2}^{2}=-6((s+1)^{2}+1)/5((s+1)^{2}-1)$, the relative corrections are $\lambda_{4}/8n$ and $5\lambda_{4}/8n-1/4\tau_{n,s}^{2}$, respectively.
\end{proof}

Under $s=2\sigma$, equation~\eqref{eq:centraldiff-correction} recovers the general singlet correction of Curtright, Van Kortryk, and Zachos \cite[eq.~(33)]{CurtrightVanKortryk2017}; for $s=2$, its $1/n$ coefficient is $-21/16$.

\begin{remark}[Higher-order corrections]
\label{rem:central-higher-orders}
Carrying the same expansion through order $n^{-4}$ gives the next terms in the brackets of \eqref{eq:centralcoeff-correction} and \eqref{eq:centraldiff-correction}, respectively,
\begin{equation}
\label{eq:central-second-corrections}
\frac{-13(s+1)^{4}+134(s+1)^{2}-13}{1120((s+1)^{2}-1)^{2}{\mku}n^{2}},
\qquad
\frac{29(s+1)^{4}+1058(s+1)^{2}+1829}{160((s+1)^{2}-1)^{2}{\mku}n^{2}}.
\end{equation}
With these terms retained, both remainders improve to $O(n^{-3})$.
\end{remark}

\begin{remark}[Central coefficients and nonnegative walks]
\label{rem:survival}
For fixed $s \geq 1$, the centered sums $T_{\!j} = S_{\!j}-js/2$ form a symmetric random walk. Put $u_{0,s}=1$ and $u_{n,s} = \PP(T_{1} \geq 0, \dots, T_{n} \geq 0)$. Its return probabilities are
\[
p_{m,s}=\PP(T_{m}=0) = 
\begin{cases}
\dfrac{\displaystyle\bis{m}{ms/2}{s}}{(s+1)^{m}}, & ms \text{ even}, \\[6pt]
\hfill 0 \hfill, & ms \text{ odd}.
\end{cases}
\]
The generalized Sparre Andersen identity, valid also for discrete jumps \cite[\S III.2]{MounaixMajumdarSchehr2020}, gives
\begin{equation}
\label{eq:survival-gf}
\sum_{n \geq 0}u_{n,s}{\mku}z^{n}
= \exp\bigg(\sum_{m \geq 1}\frac{z^{m}}{m}\PP(T_{m} \geq 0)\bigg)
= \frac{1}{\sqrt{1-z}}\exp\bigg(\frac{1}{2}\sum_{m \geq 1}\frac{p_{m,s}{\mku}z^{m}}{m}\bigg), \quad |z|<1,
\end{equation}
where symmetry gives $\PP(T_{m} \geq 0)=(1+p_{m,s})/2$. Thus the central coefficients determine the enumeration $(s+1)^{n}u_{n,s}$ of words $(x_{1},\dots,x_{n}) \in \{0,\dots,s\}^{n}$ for which every prefix satisfies $x_{1}+\cdots+x_{j} \geq js/2$. For $s=1$, the reflection principle gives $2^{n}u_{n,1} = \C{n}{\lfloor n/2\rfloor}$. The return term in \eqref{eq:survival-gf} retains the distinction between weak and strict positivity on the lattice.
\end{remark}


\section{Products of chains and first differences}
\label{sec:chains}

\subsection{Rank layers and symmetric chains}

Consider the product
\begin{equation}
P_{\bm{s}} = \prod_{i=1}^{n}\{0,1,\dots,s_{i}\}
\end{equation}
with the componentwise order and the rank function $\rho(k_{1},\dots,k_{n}) = k_{1}+\cdots+k_{n}$. Its rank-$k$ elements are precisely the bounded compositions in~\eqref{eq:problem}, so the rank-$k$ layer has cardinality $R_{\bm{s}}(k)$. In the homogeneous case, the rank generating polynomial is $(1+x+\cdots+x^{s})^{n}$.

A saturated chain in $P_{\bm{s}}$ is \emph{symmetric} if the ranks of its first and last elements sum to $N$. De Bruijn, van Ebbenhorst Tengbergen, and Kruyswijk \cite{deBruijnTengbergen1951} proved that every finite product of chains admits a partition into symmetric chains, or a symmetric chain decomposition. Stanley \cite{Stanley1980} established the connection between the hard Lefschetz theorem and Sperner properties. Proctor \cite{Proctor1982} gave a linear realization through an $\mathfrak{sl}(2,\CC)$ action whose raising operator has maximal rank between consecutive layers; this yields unimodality and the Sperner property. See Srinivasan \cite{Srinivasan2011} for a modern account.

The de Bruijn--Tengbergen--Kruyswijk construction is inductive. Given a symmetric chain decomposition of $P_{(s_{1},\dots,s_{n-1})}$, let $C = \{c_{0} < c_{1} < \cdots < c_{a-1}\}$ be one of its chains and put $b=s_{n}+1$. Symmetry of $C$ means that $\rho(c_{0})+\rho(c_{a-1}) = N-s_{n}$. The product $C \times \{0,\dots,s_{n}\}$ is an $a \times b$ grid, and its outer hook
\begin{equation}
(c_{0},0) < (c_{1},0) < \cdots < (c_{a-1},0) < (c_{a-1},1) < \cdots < (c_{a-1},s_{n})
\end{equation}
is a saturated chain with $a+b-1$ elements. Its end ranks $\rho(c_{0})$ and $\rho(c_{a-1})+s_{n}$ sum to $N$, so it is symmetric. Removing it leaves the $(a-1) \times (b-1)$ grid $\{c_{0},\dots,c_{a-2}\} \times \{1,\dots,s_{n}\}$, whose end ranks $\rho(c_{0})+1$ and $\rho(c_{a-1})+s_{n}-1$ again sum to $N$. Iteration produces $\min\{a,b\}$ symmetric chains of lengths
\begin{equation}
\label{eq:gridsplit}
a+b-1, \quad a+b-3, \quad \dots, \quad |a-b|+1.
\end{equation}
Figure~\ref{fig:grid-cg} displays the $5 \times 4$ case beside the Clebsch--Gordan decomposition with the same dimension profile.

\begin{figure}[!t]
\centering
\begin{tikzpicture}[x=7mm,y=7mm,line cap=round,line join=round]
\foreach \y in {0,...,3}{
  \draw[black!20] (0,\y) -- (4,\y);
}
\foreach \x in {0,...,4}{
  \draw[black!20] (\x,0) -- (\x,3);
}

\draw[black,line width=1.1pt] (0,0) -- (4,0) -- (4,3);
\draw[black!75,line width=1.1pt,densely dashed] (0,1) -- (3,1) -- (3,3);
\draw[black!55,line width=1.1pt,dashdotted] (0,2) -- (2,2) -- (2,3);
\draw[black!40,line width=1.1pt,densely dotted] (0,3) -- (1,3);

\foreach \x/\y in {0/0,1/0,2/0,3/0,4/0,4/1,4/2,4/3}{
  \filldraw[fill=black,draw=black] (\x,\y) circle (1.45pt);
}
\foreach \x/\y in {0/1,1/1,2/1,3/1,3/2,3/3}{
  \filldraw[fill=black!75,draw=black] (\x,\y) circle (1.45pt);
}
\foreach \x/\y in {0/2,1/2,2/2,2/3}{
  \filldraw[fill=black!55,draw=black] (\x,\y) circle (1.45pt);
}
\foreach \x/\y in {0/3,1/3}{
  \filldraw[fill=black!40,draw=black] (\x,\y) circle (1.45pt);
}

\node[font=\small,align=center] at (2,-0.95) {nested hooks \\ lengths $8,6,4,2$};
\draw[line width=0.75pt,<->] (4.7,1.5) -- (6.0,1.5);
\node[font=\small,align=center] at (9.,1.5) {$V_{2}\otimes V_{3/2}$ \\[3pt] $\cong V_{7/2} \oplus V_{5/2} \oplus V_{3/2} \oplus V_{1/2}$ \\[3pt] dimensions $8,6,4,2$};
\end{tikzpicture}
\caption{Nested-hook decomposition of a $5 \times 4$ product grid. The four symmetric saturated chains have lengths $8,6,4,2$. Since $5=2(2)+1$ and $4=2(3/2)+1$, these four chain lengths are the dimensions of the Clebsch--Gordan constituents displayed on the right. The four chain lengths and the four constituent dimensions coincide as multisets.}
\label{fig:grid-cg}
\end{figure}

Starting from the decomposition of the empty product into a single one-element chain and applying this nested-hook split to every chain at each step $i=1,\dots,n$ yields a symmetric chain decomposition of $P_{\bm{s}}$.

If $k < N/2$, every symmetric chain that meets rank $k$ continues through rank $k+1$, because it runs from some rank $c \leq k$ to the rank $N-c \geq N-k > k$. Sending each rank-$k$ element to its successor on the chain therefore injects the rank-$k$ layer into the rank-$(k+1)$ layer and proves unimodality. Reversing every chain gives a bijection between ranks $k$ and $N-k$, recovering symmetry. Moreover, every symmetric chain meets rank $\lfloor N/2\rfloor$ exactly once, so the number of chains equals $R_{\bm{s}}(\lfloor N/2\rfloor)$. Since an antichain meets each chain in at most one element, a central rank layer is a largest antichain. This is the Sperner property used in the coding application of Section~\ref{sec:hash}.

The same decomposition proves the strong Sperner property. Let $1 \leq r \leq N+1$, and let $A\subseteq P_{\bm{s}}$ contain no chain of $r+1$ elements. For every symmetric chain $C$, we have $|A \cap C| \leq \min\{r,|C|\}$. Because the ranks occupied by $C$ form an interval centered at $N/2$, the $r$ rank layers nearest the center contain exactly $\min\{r,|C|\}$ elements of $C$. When $r$ and $N$ have the same parity, these $r$ layers can be chosen in two ways, and either choice works, because every symmetric chain has $|C| \equiv N+1$~(mod $2$) and hence $|C| \neq r$. Summing over the decomposition shows that $|A|$ is at most the sum of the $r$ largest rank sizes, and the union of those $r$ central layers attains the bound.

Any symmetric chain decomposition of $P_{\bm{s}}$ has exactly $R_{\bm{s}}(\lfloor N/2\rfloor)$ chains, which in the homogeneous case is asymptotically $(s+1)^{n}\sqrt{6/\pi n s(s+2)}$ by~\eqref{eq:centralasym}. The homogeneous poset has $(s+1)^{n}$ elements. Hence, for fixed $s \geq 1$, the average chain length in any symmetric chain decomposition is
\begin{equation}
\frac{(s+1)^{n}}{\displaystyle\bis{n}{\lfloor ns/2\rfloor}{s}}
\sim \sqrt{\frac{\pi n s(s+2)}{6}}.
\end{equation}
The average length is therefore $\Theta(\!\sqrt{n})$, whereas the longest chain has $ns+1 = \Theta(n)$ elements.

\subsection{First differences and chain starts}

Fix a symmetric chain decomposition of $P_{\bm{s}}$ and set $R_{\bm{s}}(-1)=0$. Each chain contains at most one element of any given rank, so $R_{\bm{s}}(k)$ is the number of chains meeting rank $k$. For $0 \leq k \leq \lfloor N/2\rfloor$, every chain meeting rank $k-1$ also meets rank $k$. Hence the number of chains beginning at rank $k$ is
\begin{equation}
\label{eq:chainstarts}
R_{\bm{s}}(k)-R_{\bm{s}}(k-1).
\end{equation}
In the homogeneous case, define the first differences by
\begin{equation}
\label{eq:Ddef}
D_{n,s}(k) = \bis{n}{k}{s}-\bis{n}{k-1}{s}.
\end{equation}
Multiplying the generating polynomial by $1-x$ gives
\begin{equation}
\label{eq:Dgf}
\sum_{k \geq 0}D_{n,s}(k)x^{k} = 
(1-x)(1+x+\cdots+x^{s})^{n} = \frac{(1-x^{s+1})^{n}}{(1-x)^{n-1}}.
\end{equation}
For $n \geq 2$, expand $(1-x^{s+1})^{n}$ and use $[x^{m}](1-x)^{-(n-1)} = \C{m+n-2}{n-2}$ for $m \geq 0$. Coefficient extraction gives the exact first-difference formula
\begin{equation}
\label{eq:Dsum}
D_{n,s}(k) = \sum_{j=0}^{\min\{n,\lfloor k/(s+1)\rfloor\}}
(-1)^{j}{\mku}\C{n}{j}{\mku}\C{k-j(s+1)+n-2}{n-2},
\quad k \geq 0.
\end{equation}
In particular, throughout the lower boundary range $0 \leq k \leq s$, only the $j=0$ term contributes and
\begin{equation}
\label{eq:Dboundary}
D_{n,s}(k) = \C{k+n-2}{n-2}.
\end{equation}

For $0 \leq k \leq \lfloor ns/2\rfloor$, $D_{n,s}(k)$ counts the chains beginning at rank $k$. Such a chain ends at rank $ns-k$ and therefore contains $ns-2k+1$ elements. With $s=2\sigma$, this is $2(n\sigma-k)+1$, the dimension of a spin-$(n\sigma-k)$ irreducible representation.

\subsection{Symmetric chains and Clebsch--Gordan profiles}
\label{sec:cg}

Fix $s_{i}=2\sigma_{i}$, with $\sigma_{i} \in \tfrac{1}{2}\ZZ_{\geq 0}$, and let $V_{\sigma}$ denote the irreducible $\mathrm{SU}(2)$-module of spin $\sigma$, of dimension $2\sigma+1$. Assigning the coordinate value $k_{i}$ the weight $k_{i}-\sigma_{i}$ identifies the rank-$k$ elements of $P_{\bm{s}}$ with a basis of the weight space of weight $k-N/2$ in $\bigotimes_{i=1}^{n}V_{\sigma_{i}}$.

Proctor's $\mathfrak{sl}(2,\CC)$ action on the vector space with basis $P_{\bm{s}}$ \cite{Proctor1982} realizes the module $\bigotimes_{i}V_{\sigma_{i}}$; its raising operator is the sum of the $n$ single-coordinate raising operators. An irreducible decomposition of this module and a symmetric chain decomposition of the underlying poset are different objects: the former expresses a vector space as a direct sum of invariant subspaces, whereas the latter partitions the elements of a poset. Srinivasan \cite{Srinivasan2011} constructs a symmetric Jordan basis that supplies a linear bridge between them.

\begin{proposition}[Chain and constituent profiles]
\label{prop:scd-isotypic}
Let $\mathcal{C}$ be any symmetric chain decomposition of $P_{\bm{s}}$ with $s_{i}=2\sigma_{i}$. For $0 \leq k \leq \lfloor N/2\rfloor$, the number of chains in $\mathcal{C}$ beginning at rank $k$ is
\begin{equation}
\label{eq:scd-mult}
R_{\bm{s}}(k)-R_{\bm{s}}(k-1) \, = \, d^{\,(N/2-k)}_{\bm{\sigma}},
\end{equation}
the multiplicity of $V_{N/2-k}$ in $\bigotimes_{i}V_{\sigma_{i}}$. Consequently, the multiset of chain lengths in $\mathcal{C}$ equals the multiset of dimensions of the irreducible constituents. After choosing an irreducible direct-sum decomposition, we can match chains and constituent copies noncanonically, pairing each chain beginning at rank $k$ with a copy of $V_{N/2-k}$.
\end{proposition}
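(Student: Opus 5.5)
The plan is to prove the two sides of \eqref{eq:scd-mult} separately equal to $R_{\bm{s}}(k)-R_{\bm{s}}(k-1)$, and then read off the multiset statement from it.

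The chain side is already in hand. By \eqref{eq:chainstarts}, the number of chains of $\mathcal{C}$ beginning at rank $k$ is $R_{\bm{s}}(k)-R_{\bm{s}}(k-1)$ for $0\le k\le\lfloor N/2\rfloor$. The argument there used only that every chain meeting rank $k-1<N/2$ also meets rank $k$. That property holds for any symmetric chain decomposition, not just the nested-hook one.

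For the representation side, I will use the weight identification stated just before the proposition. The weight space of weight $m=k-N/2$ in $\bigotimes_i V_{\sigma_i}$ has dimension $R_{\bm{s}}(k)$. Write the module as $\bigoplus_J (V_J)^{\oplus d^{(J)}_{\bm\sigma}}$ by complete reducibility. Since $V_J$ has a one-dimensional weight space exactly for the weights $-J,-J+1,\dots,J$, we get
\[
R_{\bm{s}}(k)=\sum_{J\ge |m|,\ J\equiv m \ (\mathrm{mod}\ 1)} d^{(J)}_{\bm\sigma}.
\]
For $k\le\lfloor N/2\rfloor$ we have $m\le 0$ and $|m|=N/2-k$. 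The weights $k$ and $k-1$ have the same parity class, so subtracting the formula at $k-1$ (where $|m|$ is one larger) leaves exactly $d^{(N/2-k)}_{\bm\sigma}$. At $k=0$ we use $R_{\bm{s}}(-1)=0$, and the sum then reduces to the single top term, consistent with the formula. This telescoping is the only real computation. The main point needing care is the bookkeeping of integer versus half-integer spins: $J$ must satisfy $J\equiv N/2 \pmod 1$. This holds because every constituent of the tensor product has this parity, since all its weights lie in $N/2+\ZZ$.

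For the consequence, a chain beginning at rank $k\le\lfloor N/2\rfloor$ is symmetric, so it ends at rank $N-k$ and has $N-2k+1=2(N/2-k)+1=\dim V_{N/2-k}$ elements. Every chain begins at some rank $\le\lfloor N/2\rfloor$, because it meets the central rank. Grouping chains by starting rank and constituents by spin $J=N/2-k$, with $k$ ranging over $0,\dots,\lfloor N/2\rfloor$ (which covers every $J\ge0$ of the right parity), the counts agree in each class. Hence the multisets of lengths and dimensions coincide. Once a direct-sum decomposition is fixed, any bijection within each class gives the noncanonical matching.
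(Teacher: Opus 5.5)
Your proposal is correct and follows the paper's proof essentially step for step: chain starts come from \eqref{eq:chainstarts}, the spin-$J$ multiplicity is the difference $w_{-J}-w_{-(J+1)}$ of weight-space dimensions read off through the weight dictionary, and chain lengths match constituent dimensions via $N-2k+1=\dim V_{N/2-k}$. Your only additions are the explicit parity bookkeeping $J\equiv N/2 \pmod 1$ and the remark that every symmetric chain starts at rank at most $\lfloor N/2\rfloor$, which the paper leaves implicit.
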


\begin{proof}
By \eqref{eq:chainstarts}, the number of chains beginning at rank $k$ is $R_{\bm{s}}(k)-R_{\bm{s}}(k-1)$. Let $w_{m}$ denote the dimension of the weight-$m$ space and put $J=N/2-k$. A spin-$L$ summand contributes one vector at every weight from $-L$ to $L$. Hence $w_{-J}$ and $w_{-(J+1)}$ are the total multiplicities of constituents with spin at least $J$ and $J+1$, respectively. Their difference isolates the spin-$J$ multiplicity:
\begin{equation}
d^{\,(J)}_{\bm{\sigma}} = w_{-J}-w_{-(J+1)}.
\end{equation}
The weight-space dictionary gives $w_{-J} = R_{\bm{s}}(k)$ and $w_{-(J+1)} = R_{\bm{s}}(k-1)$, proving \eqref{eq:scd-mult}. A chain beginning at rank $k$ ends at rank $N-k$ and therefore has $N-2k+1$ elements, the dimension of $V_{N/2-k}$. Pair chains and constituent copies separately for each starting rank.
\end{proof}

The Clebsch--Gordan decomposition of the tensor product $V_{\alpha}\otimes V_{\beta}$ is
\begin{equation}
\label{eq:cg}
V_{\alpha}\otimes V_{\beta} \,\cong\,
\bigoplus_{J=|\alpha-\beta|}^{\alpha+\beta} V_{J},
\end{equation}
where $J$ increases in unit steps. This tensor-product decomposition is the representation-theoretic counterpart of the grid rule~\eqref{eq:gridsplit}. With $a=2\alpha+1$ and $b=2\beta+1$, its constituents have dimensions $a+b-1,a+b-3,\dots,|a-b|+1$, and there are $\min\{a,b\}$ of them.

\begin{remark}
\label{rem:cg}
The de Bruijn--Tengbergen--Kruyswijk nested-hook induction therefore mirrors the Clebsch--Gordan rule at the level of profiles. The conclusion is an equality of length--dimension profiles, not a canonical identification of individual chains with irreducible summands.
\end{remark}

For $0 \leq k \leq \lfloor ns/2\rfloor$, equation~\eqref{eq:Dsum} is a character-free closed form for the multiplicity $d^{\,(ns/2-k)}_{n,s/2}$.

\begin{example}[A binary row and its interpretations]
\label{ex:binary-dictionary}
For $(n,s)=(4,1)$, Table~\ref{tab:bisnomial-triangles} gives the coefficient row $(1,4,6,4,1)$, whose lower-half first differences are $(1,3,2)$. The middle layer contains the six binary words with exactly two ones, so four independent fair binary draws total two with probability $6/16=3/8$. Every symmetric chain decomposition of $\{0,1\}^{4}$ has one chain of length five, three of length three, and two of length one. Proposition~\ref{prop:scd-isotypic} gives the matching constituent profile
\[
V_{1/2}^{\,\otimes 4} \cong V_{2} \oplus 3V_{1} \oplus 2V_{0},
\quad 5+3 \cdot 3+2 \cdot 1 = 16.
\]
Thus the zero-magnetization subspace has dimension six, whereas the singlet subspace has dimension $6-4=2$: each spin-$2$ or spin-$1$ constituent also contributes one zero-weight vector.
\end{example}

Proposition~\ref{prop:scd-isotypic} and the profile comparison hold for arbitrary bounds, hence for mixed spins. We work out the mixed-spin case in Section~\ref{sec:extensions}.


\section{Representation theory and applications}
\label{sec:conn}

\subsection{Spin multiplicities}
\label{sec:spin}

Specialize the dictionary of Section~\ref{sec:cg} to $n$ identical spin-$\sigma$ particles, $\sigma \in \tfrac{1}{2}\ZZ_{\geq 0}$. If $m$ denotes the total magnetization, use the identifications
\begin{equation}
\label{eq:spindict}
s = 2\sigma, \quad k = m+\sigma n.
\end{equation}
Here $m+n\sigma \in \ZZ$, and the spin labels satisfy $0 \leq J \leq n\sigma$ and $J-n\sigma \in \ZZ$. The coefficient $\bis{n}{m+\sigma n}{\,2\sigma}$ is the dimension of the total-magnetization-$m$ subspace. The multiplicity $d^{\,(J)}_{n,\sigma}$ of the spin-$J$ irreducible representation in the $n$-fold tensor product of spin-$\sigma$ representations is the difference between the dimensions of the magnetization-$J$ and magnetization-$(J+1)$ subspaces:
\begin{equation}
\label{eq:dJ}
d^{\,(J)}_{n,\sigma} = \bis{n}{J+\sigma n}{2\sigma}-\bis{n}{J+\sigma n+1}{2\sigma}.
\end{equation}

The midpoint shift also has a representation-theoretic interpretation. In spin units, the Weyl vector of $\mathfrak{su}(2)$ is $\rho=\tfrac{1}{2}$, and
\[
\dim V_{J}=2(J+\rho), \qquad J(J+1)=(J+\rho)^{2}-\rho^{2}.
\]
Thus the Gaussian factor in the central multiplicity profile of Section~\ref{sec:llt} is expressed in terms of the shifted spin $J+\tfrac{1}{2}$, or equivalently through the shifted Casimir $J(J+1)+\tfrac{1}{4}$.

To measure the deficit from maximal spin, set $\ell = n\sigma-J$. Then $J+\sigma n = 2\sigma n-\ell$, and the symmetry relation~\eqref{eq:symmetry}, with $ns = 2\sigma n$, replaces the lower entries $J+\sigma n$ and $J+\sigma n+1$ in~\eqref{eq:dJ} by $\ell$ and $\ell-1$:
\begin{equation}
\label{eq:ddeficit}
d^{\,(n\sigma-\ell)}_{n,\sigma} = \bis{n}{\ell}{2\sigma}-\bis{n}{\ell-1}{2\sigma} = D_{n,2\sigma}(\ell).
\end{equation}
Proposition~\ref{prop:scd-isotypic} gives a second interpretation of \eqref{eq:ddeficit}: $D_{n,2\sigma}(\ell)$ is the number of symmetric chains beginning at rank $\ell$ in $\{0,1,\dots,2\sigma\}^{n}$, and each such chain has $2J+1$ elements.

The boundary range $0 \leq k \leq s$ corresponds to $-\sigma n \leq m \leq -\sigma(n-2)$. Its reflection under $m\mapsto -m$ gives the opposite boundary range, so their union is the near-saturation sector
\begin{equation}
\label{eq:saturation}
|m| \geq \sigma(n-2).
\end{equation}
In this sector, the fixed-magnetization subspaces have figurate-number dimensions, and the irreducible multiplicities are their first differences. For $n \geq 2$ and $0 \leq \ell \leq 2\sigma$, equations~\eqref{eq:ddeficit} and \eqref{eq:Dboundary} give
\begin{equation}
\label{eq:nearspin}
d^{\,(n\sigma-\ell)}_{n,\sigma} = \C{\ell+n-2}{n-2}.
\end{equation}
For $\sigma \geq 1$, the values at $\ell=0,1,2$ are $1$, $n-1$, and $\C{n}{2}$; when $\sigma=\tfrac{1}{2}$, only $\ell=0,1$ lie in the boundary range. The distinction between weight-space dimension and irreducible multiplicity is already visible at $\ell=1$: the magnetization-$(n\sigma-1)$ subspace has dimension $n$, whereas spin $J=n\sigma-1$ occurs with multiplicity $n-1$.

When $n=2\nu$ is even, the central first difference
\begin{equation}
\label{eq:29}
d^{\,(0)}_{2\nu,\sigma} = \bis{2\nu}{2\sigma\nu}{2\sigma} - \bis{2\nu}{2\sigma\nu+1}{2\sigma}
\end{equation}
is the dimension of the $\mathrm{SU}(2)$-invariant subspace of the $2\nu$-fold tensor product of the $(2\sigma+1)$-dimensional irreducible representation.

Equation~\eqref{eq:29} recovers familiar low-spin sequences. For $\sigma=\tfrac{1}{2}$ it gives the Catalan numbers $\C{2\nu}{\nu}-\C{2\nu}{\nu+1}$, and for $\sigma=1$ the even-index Riordan numbers. For $\sigma=2$ it gives the dimensions of the spaces of even-degree noncommutative $\mathrm{SL}(2,\CC)$-invariants in five variables. Almkvist, Dicks, and Formanek \cite{AlmkvistDicks1985} studied these invariant spaces before the spin-multiplicity interpretation. These sequences are catalogued in the OEIS; Table~\ref{tab:oeis} gives the concordance.

\begin{table}[htbp]
\centering
\caption{The singlet sequences \eqref{eq:29} and their OEIS entries. For
$\sigma=1$ and $\sigma=2$, the table lists the even-index subsequence of the
cited entry.}
\label{tab:oeis}
\medskip
\begin{tabular}{@{}cccc@{}}
\toprule
$\sigma$ & $s=2\sigma$ & $d^{\,(0)}_{2\nu,\sigma}$,\, $\nu \geq 0$ & OEIS concordance \\
\midrule
$\frac{1}{2}$ & $1$ & $1,\, 1,\, 2,\, 5,\, 14,\, 42,\, 132,\, \dots$ & \href{https://oeis.org/A000108}{A000108}$(\nu)$ (Catalan) \\[3pt]
$1$ & $2$ & $1,\, 1,\, 3,\, 15,\, 91,\, 603,\, 4213,\, \dots$ & \href{https://oeis.org/A005043}{A005043}$(2\nu)$ (Riordan) $=$ \href{https://oeis.org/A099251}{A099251}$(\nu)$ \\[3pt]
$\frac{3}{2}$ & $3$ & $1,\, 1,\, 4,\, 34,\, 364,\, 4269,\, 52844,\, \dots$ & \href{https://oeis.org/A264607}{A264607}$(\nu)$ \\[3pt]
$2$ & $4$ & $1,\, 1,\, 5,\, 65,\, 1085,\, 19845,\, 383251,\, \dots$ & \href{https://oeis.org/A007043}{A007043}$(2\nu)$ \\[3pt]
$\frac{5}{2}$ & $5$ & $1,\, 1,\, 6,\, 111,\, 2666,\, 70146,\, 1949156,\, \dots$ & \href{https://oeis.org/A272391}{A272391}$(\nu)$ \\
\bottomrule
\end{tabular}
\end{table}

The two Riordan entries record complementary interpretations. OEIS entry \href{https://oeis.org/A099251}{A099251} identifies the even-index Riordan numbers with the Haar moments $\langle(\operatorname{tr}O)^{2\nu}\rangle$ over the $3 \times 3$ orthogonal group. The double cover $\mathrm{SU}(2) \to \mathrm{SO}(3)$ mediates this correspondence. OEIS entry \href{https://oeis.org/A005043}{A005043} records the Motzkin- and Riordan-path descriptions.

\begin{remark}[Sequential spin coupling]
\label{rem:spin-chain}
Iterating the Clebsch--Gordan rule \eqref{eq:cg} identifies $d^{\,(J)}_{n,\sigma}$ with the number of coupling paths $J_{0} = 0, J_{1}, \dots, J_{n} = J$, where $J_{j+1}$ runs from $|J_{j}-\sigma|$ to $J_{j}+\sigma$ in unit steps. The dimension identity $\sum_{L=|J-\sigma|}^{J+\sigma}(2L+1) = (2J+1)(2\sigma+1)$ makes
\begin{equation}
\label{eq:spin-chain-kernel}
K_{\sigma}(J,L) = \frac{2L+1}{(2J+1)(2\sigma+1)},
\quad 
L = |J-\sigma|,|J-\sigma|+1,\dots,J+\sigma,
\end{equation}
with zero entries otherwise, a Markov transition kernel on the spin labels. This is the $\mathrm{SU}(2)$ specialization of the tensor-product chain of Biane, Bougerol, and O'Connell \cite[\S 5.5]{BianeBougerolOConnell2005}. Along a path, the dimension factors telescope, so starting at $J_{0}=0$ gives
\begin{equation}
\label{eq:spin-chain-law}
\PP(J_{n}=J) = \frac{(2J+1)d^{\,(J)}_{n,\sigma}}{(2\sigma+1)^{n}}.
\end{equation}
This is the fraction of the full tensor-product dimension belonging to the spin-$J$ sector. For $\sigma=\tfrac12$, the coupling paths ending at zero are Dyck paths after doubling their heights. For $\sigma=1$, they are Riordan paths: non-negative paths with steps $-1,0,1$ and no horizontal step at height zero, since $V_{0} \otimes V_{1} \cong V_{1}$. This boundary rule distinguishes them from ordinary non-negative bridges of the centered walk in Remark~\ref{rem:survival}.
\end{remark}

\subsection{\texorpdfstring{The $s$-Catalan numbers as Littlewood--Richardson coefficients}{The s-Catalan numbers as Littlewood-Richardson coefficients}}
\label{sec:scat}

Belbachir and Igueroufa \cite[Def.~3.2]{BelbachirIgueroufa2020a} name the central first differences \eqref{eq:29}
\begin{equation}
\label{eq:scat}
\mathrm{Cat}^{(s)}_{\nu} \deq \bis{2\nu}{s\nu}{s}-\bis{2\nu}{s\nu+1}{s}
\end{equation}
the \emph{$s$-Catalan numbers}, under a standing hypothesis that $s$ is odd, and already observe that these values occur as spin multiplicities \cite{CohenHansen2016,CurtrightVanKortryk2017}. Linz \cite{Linz2022} keeps the name for every $s \geq 1$, uses the notation $\bis{n}{k}{s}$ adopted here alongside the term \emph{$s$-binomial}, and tabulates the values in his Figure~1. For $s=2\sigma$, $\mathrm{Cat}^{(s)}_{\nu}=d^{\,(0)}_{2\nu,\sigma}$.

For a partition $\lambda=(\lambda_{1},\dots,\lambda_{r})$, let $s*\lambda = (s\lambda_{1},\dots,s\lambda_{r})$ denote its horizontal stretch by a factor $s$. Figure~\ref{fig:partition-stretch} shows $\lambda_{4}$ and $2*\lambda_{4}$.

\begin{figure}[hb]
\centering
\begin{tikzpicture}[x=4.5mm,y=4.5mm,line width=0.5pt]
\foreach \row/\parts in {0/4,1/3,2/2,3/1}{
  \foreach \col in {1,...,\parts}{
    \draw ({\col-1},-\row) rectangle (\col,{-\row-1});
  }
}
\node at (2,0.75) {\small $\lambda_{4}=(4,3,2,1)$};
\draw[line width=0.75pt,->] (4.7,-2) -- (7.3,-2);
\node at (6,-1.4) {\small $s=2$};
\foreach \row/\parts in {0/4,1/3,2/2,3/1}{
  \foreach \col in {1,...,\parts}{
    \draw ({8+2*(\col-1)},-\row) rectangle ({8+2*\col},{-\row-1});
    \draw[gray!70] ({7+2*\col},-\row) -- ({7+2*\col},{-\row-1});
  }
}
\node at (12,0.75) {\small $2*\lambda_{4}=(8,6,4,2)$};
\end{tikzpicture}
\caption{The staircase partition $\lambda_{4} = (4,3,2,1)$ and its horizontal stretch $2*\lambda_{4} = (8,6,4,2)$. The gray dividers separate the two boxes arising from each original box.}
\label{fig:partition-stretch}
\end{figure}

Let $\lambda_{\nu}=(\nu,\nu-1,\dots,1)$. Linz \cite[Theorem~2]{Linz2022} gives the Littlewood--Richardson interpretation
\begin{equation}
\label{eq:LR}
d^{\,(0)}_{2\nu,\sigma} = \mathrm{Cat}^{(s)}_{\nu} = 
c^{\,s*\lambda_{2\nu}}_{\,s*\lambda_{2\nu-1},\,s*(\nu,\nu)}~, \quad s = 2\sigma.
\end{equation}
The proof uses a reflection-principle bijection on Littlewood--Richardson tableaux. For fixed $\nu$, the coefficient in \eqref{eq:LR} is polynomial in $s$, explaining, as Linz notes, the polynomial dependence that Curtright, Van Kortryk, and Zachos \cite{CurtrightVanKortryk2017} observed empirically for the singlet multiplicities. The Littlewood--Richardson tableau model also supplies a combinatorial interpretation of the $s$-Catalan numbers, as Belbachir and Igueroufa \cite{BelbachirIgueroufa2020a} sought. Linz's Theorem~4 further gives, for integers $s \geq 1$ and $\mu \geq 0$,
\begin{equation}
\label{eq:linz4}
\bis{2\mu+1}{s(\mu+1)}{s}-\bis{2\mu+1}{s(\mu+1)+1}{s} = 
\bis{2\mu+2}{s(\mu+1)}{s}-\bis{2\mu+2}{s(\mu+1)+1}{s}.
\end{equation}
At $s=1$, this identity reduces to the two familiar expressions for the Catalan numbers. In the notation of~\eqref{eq:dJ}, with $\sigma = s/2$, its two sides are $d^{\,(\sigma)}_{2\mu+1,\sigma}$ and $d^{\,(0)}_{2\mu+2,\sigma}$. It also follows in two lines from the longitudinal recurrence $\bis{n+1}{K}{s} = \sum_{i=0}^{s}\bis{n}{K-i}{s}$ \cite[eq.~(1.4)]{BelbachirBouroubi2008}. With $n = 2\mu+1$ and $K = s(\mu+1)$, the difference of two such sums telescopes to $\bis{n}{K-s}{s}-\bis{n}{K+1}{s}$, and $\bis{n}{K-s}{s} = \bis{n}{K}{s}$ by~\eqref{eq:symmetry}, since $(K-s)+K = ns$.

Linz's ``spin $\sigma$-Catalan number'' \cite[Def.~3]{Linz2022} uses a shifted index, corresponding to the multiplicity of $V_{\sigma}$ in $V_{\sigma}^{\otimes n}$. Self-duality, $V_{\sigma}^{*} \cong V_{\sigma}$, gives $d^{\,(\sigma)}_{n,\sigma} = d^{\,(0)}_{n+1,\sigma}$, which explains the one-step offset between Linz's table and~\eqref{eq:29} and contains~\eqref{eq:linz4} as the case $n = 2\mu+1$.

\begin{example}[Four-particle singlets]
\label{ex:four-singlets}
For four identical spin-$\sigma$ particles, put $s = 2\sigma \geq 1$. The Clebsch--Gordan rule gives $V_{\sigma} \otimes V_{\sigma} \cong \bigoplus_{J=0}^{s}V_{J}$. A tensor product $V_{J} \otimes V_{L}$ contains a singlet precisely when $J=L$, and then with multiplicity one. Pairing two copies of the two-particle decomposition therefore gives
\[
d^{\,(0)}_{4,s/2}=\mathrm{Cat}^{(s)}_{2}=\bis{4}{2s}{s}-\bis{4}{2s+1}{s}=s+1.
\]
For $s=1,2,3$, the rows $n=4$ in Table~\ref{tab:bisnomial-triangles} give $6-4=2$, $19-16=3$, and $44-40=4$. Equation~\eqref{eq:LR} consequently evaluates the stretching polynomial
\[
c^{\,(4s,3s,2s,s)}_{\,(3s,2s,s),\,(2s,2s)}=s+1.
\]
\end{example}

\subsection{Entanglement witnesses and spectral degeneracies}

In the entanglement witnesses studied by Cohen, Hansen, and Itzhaki \cite{CohenHansen2016}, the singlet multiplicities $d^{\,(0)}_{2\nu,\sigma}$ give the dimension of the witness's singlet eigenspace. For $\sigma = \tfrac{1}{2}$, the spin-$J$ multiplicities give the spectral degeneracies within each admissible fixed-particle-number sector of the symmetric simple exclusion process on the complete graph; over the full state space, the degeneracies are $(2J+1){\mku}d^{\,(J)}_{n,1/2}$. Mendon\c{c}a \cite{Mendonca2013} treats complete bipartite and related graphs.

\subsection{Constant-sum antichains and hash-based signatures}
\label{sec:hash}

The Winternitz one-time signature framework encodes a message as a word $(k_{1},\dots,k_{n})$ over the alphabet $\{0,1,\dots,s\}$. Signing a word reveals, for each $i$, the $k_{i}$th element of the $i$th hash chain, and anyone can advance along a hash chain but not reverse it. A signature for one codeword therefore yields a signature for every componentwise larger word, so the codewords must form an antichain. Every constant-sum layer satisfies this condition because the rank levels of the product of chains in Section~\ref{sec:chains} are antichains.

Such a layer contains $\bis{n}{k}{s}$ words and is largest at a central rank by \eqref{eq:mode}. Zhang, Cui, and Yu \cite{ZhangCui2023} therefore take $k = \lfloor ns/2\rfloor$. They evaluate its size using Belbachir, Bouroubi, and Khelladi's longitudinal recurrence \cite[eq.~(1.4)]{BelbachirBouroubi2008} together with the support-sharp closed form \eqref{eq:BBK21}, and report signature-size reductions of up to $12.4\%$ for SPHINCS$^{+}$ and $1.78\%$ for XMSS. Earlier work on constant-sum encodings in this signature framework is due to Perin, Zambonin, Cust\'odio, Moura, and Panario \cite{PerinZambonin2021}.

\begin{example}[A seven-word constant-sum codebook]
\label{ex:codebook}
For $(n,s)=(3,2)$, the central layer $k=3$ consists of
\[
\{\,012,021,102,111,120,201,210\,\},
\]
where $abc$ denotes the word $(a,b,c)$. Its size is $\bis{3}{3}{\,2}=7$, as in Table~\ref{tab:bisnomial-triangles}, and it is a largest antichain in $\{0,1,2\}^{3}$. If one word were componentwise larger than another, with at least one strict inequality, its coordinate sum would be larger, so two distinct words in this layer are incomparable. For instance, a signature for $012$ permits advancing the first hash chain to obtain a signature for $112$. The latter word has sum four and lies outside the codebook; every componentwise advance that changes a word similarly leaves the constant-sum layer.
\end{example}


\section{Unequal bounds, boundary crossover, and further directions}
\label{sec:extensions}

\subsection{Mixed bounds and mixed spins}

For a collection of particles with spins $\bm{\sigma}=(\sigma_{1},\dots,\sigma_{n})$, put $s_{i}=2\sigma_{i}$ and write $S=\sum_{i}\sigma_{i}$. The dimension of the magnetization-$m$ space is
\begin{equation}
R_{2\bm{\sigma}}(m+S),
\end{equation}
and, with $\ell=S-J$, the spin-$J$ multiplicity is
\begin{equation}
\label{eq:mixedspin}
d^{\,(S-\ell)}_{\bm{\sigma}} = R_{2\bm{\sigma}}(\ell)-R_{2\bm{\sigma}}(\ell-1).
\end{equation}
The spin labels satisfy $0\leq J\leq S$ and $J-S\in\ZZ$, so $\ell=S-J$ is an integer.
The right-hand side of \eqref{eq:mixedspin} also gives the number of symmetric chains beginning at rank $\ell$ in the unequal product $\prod_{i}\{0,1,\dots,2\sigma_{i}\}$. Each such chain has $2(S-\ell)+1$ elements. Thus Proposition~\ref{prop:scd-isotypic} shows that, at the level of profiles, the numbers and lengths of the chains agree with the multiplicities and dimensions of the mixed-spin constituents.

If $n \geq 2$ and $0 \leq \ell \leq \min_{i}(2\sigma_{i})$, the inhomogeneous boundary formula~\eqref{eq:27} and Pascal's rule, $\C{\ell+n-1}{n-1}-\C{\ell+n-2}{n-1} = \C{\ell+n-2}{n-2}$, give
\begin{equation}
\label{eq:mixednearspin}
d^{\,(S-\ell)}_{\bm{\sigma}} = \C{\ell+n-2}{n-2}.
\end{equation}
Throughout this range, the multiplicity depends only on $n$ and $\ell$, not on the individual spins. Beyond this boundary range, the mixed-spin multiplicities still admit a closed form. Multiplying the inhomogeneous generating polynomial by $1-x$ gives
\begin{equation}
\label{eq:Dinhomgf}
\sum_{k \geq 0}\big[R_{\bm{s}}(k)-R_{\bm{s}}(k-1)\big]{\mku}x^{k}
= \frac{\prod\limits_{i=1}^{n}\big(1-x^{s_{i}+1}\big)}{(1-x)^{n-1}},
\end{equation}
and extracting the coefficient of $x^{k}$ yields, for $n \geq 2$,
\begin{equation}
\label{eq:Dinhom}
R_{\bm{s}}(k)-R_{\bm{s}}(k-1) = \sum_{I \subseteq \{1,\dots,n\}} (-1)^{|I|}\,\C{\,k-\sum\limits_{i \in I}(s_{i}+1)+n-2\,}{n-2},
\end{equation}
the inhomogeneous counterpart of \eqref{eq:Dsum}, valid for $k \geq 0$. On the lower half of the rank range, $0 \leq k \leq \lfloor N/2\rfloor$, the left-hand side of \eqref{eq:Dinhom} is exactly the mixed-spin multiplicity $d^{\,(S-k)}_{\bm{\sigma}}$. When all bounds are equal, the $2^{n}$ subset-indexed terms are grouped, according to $j = |I|$, into the terms of~\eqref{eq:Dsum}. Under convention~\eqref{eq:elementary}, every subset $I$ satisfying $\sum_{i \in I}(s_{i}+1) > k$ contributes zero; the Cauchy-product argument of Section~\ref{sec:conventions}, applied to~\eqref{eq:Dinhomgf}, shows that these subsets do not arise at all.

In the range where the first differences give multiplicities, equation~\eqref{eq:Dinhom} is the subset-indexed form of Gyamfi and Barone's formula \cite[Theorem~5]{GyamfiBarone2018}; the boundary formulas~\eqref{eq:nearspin} and~\eqref{eq:mixednearspin} follow by retaining the term indexed by the empty subset.

\begin{example}
\label{ex:mixed}
Take $\bm{\sigma}=(\tfrac{1}{2},1,1)$, so $\bm{s}=(1,2,2)$, $N=5$, and $S=\tfrac{5}{2}$. The rank-size sequence is $(1,3,5,5,3,1)$, whose lower-half first differences are $1,2,2$. Hence the product of chains $\{0,1\} \times \{0,1,2\}^{2}$ decomposes into one six-element symmetric chain beginning at rank $0$, two four-element symmetric chains beginning at rank $1$, and two two-element symmetric chains beginning at rank $2$. The corresponding Clebsch--Gordan decomposition is
\begin{equation}
V_{1/2} \otimes V_{1} \otimes V_{1} \cong V_{5/2} \oplus 2V_{3/2} \oplus 2V_{1/2}.
\end{equation}
This decomposition has the same length--dimension and start--multiplicity profiles as the chain decomposition, and formula~\eqref{eq:Dinhom} returns $1,2,2$ directly.
\end{example}

The convolution interpretation, moment formulas, log-concavity, and existence of a central mode in Section~\ref{sec:prob} extend directly to unequal bounds. Let $X_{i} \sim U_{s_{i}}$ be independent and set $S_{\!n}=X_{1}+\cdots+X_{n}$. Then
\begin{equation}
\label{eq:mixedmoments}
\EE(S_{\!n}) = \frac{N}{2},
\quad
\operatorname{Var}(S_{\!n}) = \sum_{i=1}^{n}\frac{s_{i}(s_{i}+2)}{12} \eqd \tau_{\bm{s}}^{2}.
\end{equation}
Each factor $1+x+\cdots+x^{s_{i}}$ has a log-concave coefficient sequence with no internal zeros. Convolution preserves these properties, so $k \mapsto R_{\bm{s}}(k)$ is log-concave and unimodal. Symmetry places a maximum at $k=\lfloor N/2\rfloor$. Independently, the symmetric chain decomposition of Section~\ref{sec:chains} shows that this central layer is a largest antichain. Consider a triangular array of bound vectors $\bm{s}^{(n)}$, and write $\bm{s}$ for its $n$th row. A Lindeberg central limit theorem applies under the sufficient conditions
\begin{equation}
\tau_{\bm{s}}^{2} \longrightarrow \infty,
\quad
\frac{\max\limits_{1 \leq i \leq n}s_{i}^{2}}{\tau_{\bm{s}}^{2}} 
\longrightarrow 0.
\end{equation}
These conditions hold, in particular, for uniformly bounded arrays with a growing number of nonzero bounds. Since the corresponding probability mass functions are log-concave with no internal zeros, Bender's local-limit lemma \cite[Lemma~2]{Bender1973} upgrades this central limit theorem to
\begin{equation}
\label{eq:mixedllt}
\sup_{k \in \ZZ}\,\Bigg|\tau_{\bm{s}}\frac{R_{\bm{s}}(k)}{\prod\limits_{i=1}^{n}(s_{i}+1)} - \varphi\bigg(\frac{k-N/2}{\tau_{\bm{s}}}\bigg)
\Bigg| \longrightarrow 0.
\end{equation}

\subsection{The boundary crossover}

The boundary formula~\eqref{eq:Dboundary} remains asymptotically valid beyond its exact range. For $k$ indistinguishable balls in $n-1$ urns, Menon and Indira's Poisson limit theorem \cite[Theorem~2]{MenonIndira1990} shows that the number of urns holding exactly $s+1$ balls is asymptotically Poisson when $k \to \infty$, $k/n \to 0$, and $k^{s+1}/n^{s}$ converges. Urns holding more than $s+1$ balls are then asymptotically absent, their expected number being $O(k^{s+2}/n^{s+1})$, so the theorem implies the following crossover.

\begin{proposition}[Boundary crossover]
\label{prop:boundary-crossover}
Fix $s \geq 1$, and let $k=k(n)$ be a sequence of integers tending to infinity. If
\begin{equation}
\frac{k}{n^{s/(s+1)}} \longrightarrow \lambda \in [0,\infty),
\end{equation}
then
\begin{equation}
\label{eq:boundary-crossover}
\frac{D_{n,s}(k)}{\displaystyle\C{k+n-2}{n-2}} 
\longrightarrow e^{-\lambda^{s+1}}.
\end{equation}
Consequently, if $k=o\big(n^{s/(s+1)}\big)$, then $D_{n,s}(k) \sim \C{k+n-2}{n-2}$.
\end{proposition}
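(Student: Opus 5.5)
The plan is to read $D_{n,s}(k)$ as a count of bounded occupancy configurations with $n-1$ cells, so that the ratio in \eqref{eq:boundary-crossover} becomes an occupancy probability. By \eqref{eq:Dgf},
\[
\sum_{k}D_{n,s}(k)x^{k}=(1-x^{s+1})\,(1+x+\cdots+x^{s})^{n-1}.
\]
Hence $D_{n,s}(k)=\bis{n-1}{k}{s}-\bis{n-1}{k-s-1}{s}$. The denominator $\C{k+n-2}{n-2}$ is the number of ways to place $k$ indistinguishable balls in $n-1$ urns.

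Under the uniform (Bose--Einstein) law on these placements, $\bis{n-1}{k}{s}/\C{k+n-2}{n-2}=\PP(\text{no urn holds}\ge s+1)$. Write $\mu_{s+1}$ for the number of urns holding exactly $s+1$ balls, and $\mu_{>s+1}$ for the number holding more.

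As the paper notes, Menon and Indira's theorem makes $\mu_{s+1}$ asymptotically Poisson. Its mean is
\[
\EE\mu_{s+1}=(n-1)\C{k-s-1+n-3}{n-3}\Big/\C{k+n-2}{n-2}\sim n(k/n)^{s+1}=k^{s+1}/n^{s}\to\lambda^{s+1}.
\]
This holds because the Bose--Einstein occupancy of a single urn is asymptotically geometric with ratio $k/(n+k)\sim k/n$. Meanwhile $\EE\mu_{>s+1}=O(k^{s+2}/n^{s+1})=O(\lambda^{s+2}n^{-1/(s+1)})\to0$.

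So the first term satisfies $\bis{n-1}{k}{s}/\C{k+n-2}{n-2}=\PP(\mu_{s+1}=0)+o(1)\to e^{-\lambda^{s+1}}$. If one prefers to avoid citing the limit theorem, I would instead compute the factorial moments $\EE\binom{\mu_{s+1}}{r}$ by the same stars-and-bars shift. These are $\binom{n-1}{r}\C{k-r(s+1)+n-2-r}{n-2-r}/\C{k+n-2}{n-2}\sim(k^{s+1}/n^{s})^{r}/r!$, and the method of moments then gives the Poisson limit.

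The remaining step is to show that the subtracted term is negligible:
\[
\bis{n-1}{k-s-1}{s}\le\C{k-s-1+n-2}{n-2}.
\]
The ratio of this bound to $\C{k+n-2}{n-2}$ is $\prod_{i=0}^{s}\frac{k-i}{k+n-2-i}\le(k/n)^{s+1}(1+o(1))\to0$, since $k/n\to0$. Combining the two steps gives \eqref{eq:boundary-crossover}.

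For the consequence, $\lambda=0$ gives limit $1$. A sequence with $k=o(n^{s/(s+1)})$ fits the hypothesis with $\lambda=0$ when $k\to\infty$. For bounded $k$, either $k\le s$, in which case \eqref{eq:Dboundary} gives exact equality, or the same estimates go through with $\EE\mu_{\ge s+1}\to0$.

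The main obstacle is the Poisson step: justifying $\PP(\mu_{s+1}=0)\to e^{-\lambda^{s+1}}$ uniformly enough, and checking the factorial-moment asymptotics when $r$ is fixed but $k\to\infty$. I would handle this by taking ratios of binomials, using $\C{m-a}{b}/\C{m}{b}\to$ products of $(1-\cdot)$ factors, together with $k^{2}/n\to0$ when $s=1$. That last condition holds because $k\sim\lambda n^{1/2}$, so $k^{2}/n$ stays bounded and the corrections are of relative order $k/n$; that relative order is the quantity I would verify carefully.
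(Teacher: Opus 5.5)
Your proof is correct and is essentially the paper's. It uses the same peeling $D_{n,s}(k)=\bis{n-1}{k}{s}-\bis{n-1}{k-s-1}{s}$ and the same bound on the subtracted term. Your split into urns holding exactly $s+1$ balls, plus a first-moment bound on fuller urns, is the heuristic the paper states just before the proposition. The paper's written proof streamlines this by applying Bonferroni directly to the number of parts $\geq s+1$. Its factorial moments $\binom{n-1}{j}\binom{k-(s+1)j+n-2}{n-2}\big/\binom{k+n-2}{n-2}$ are exactly the inclusion--exclusion terms, so no separate estimate of $\mu_{>s+1}$ is needed.

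One small correction: $k^{2}/n\to0$ fails for $s=1$ (it tends to $\lambda^{2}$), and it is not needed. Each fixed-shift ratio only requires $k\to\infty$ and $k/n\to0$, with relative error $O(1/k+k/n)$.
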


\begin{proof}
Put $r=s+1$, $\theta=\lambda^{r}$ and $B_{n}(k)=\C{k+n-2}{n-2}$, and note that $k/n \to 0$. Since $(1-x)(1+x+\cdots+x^{s}) = 1-x^{r}$, the generating function~\eqref{eq:Dgf} equals $(1-x^{r})(1+x+\cdots+x^{s})^{n-1}$, and therefore
\begin{equation}
D_{n,s}(k) = \bis{n-1}{k}{s}-\bis{n-1}{k-r}{s}.
\end{equation}
Choose a weak composition of $k$ into $n-1$ parts uniformly, and let $Y_{n}$ be the number of parts at least $r$. Then $\PP(Y_{n}=0)=\bis{n-1}{k}{s}/B_{n}(k)$. Requiring any prescribed set of $j$ parts to be at least $r$ and subtracting $r$ from each gives a bijection with the weak compositions of $k-rj$, so that $\EE\big[\C{Y_{n}}{j}\big] = \C{n-1}{j}B_{n}(k-rj)/B_{n}(k)$. For each fixed integer $t \geq 0$,
\begin{equation}
\label{eq:Bratio}
\frac{B_{n}(k-t)}{B_{n}(k)} = \prod_{\ell=0}^{t-1}\frac{k-\ell}{n+k-2-\ell} = \bigg(\frac{k}{n}\bigg)^{t}\big(1+o(1)\big),
\end{equation}
because each factor is $(k/n)\big(1+o(1)\big)$ when $k \to \infty$ and $k/n \to 0$. With $\C{n-1}{j} \sim n^{j}/j!$, this gives, for each fixed $j \geq 1$,
\begin{equation}
\EE\bigg[\C{Y_{n}}{j}\bigg] \sim \frac{n^{j}}{j!}\bigg(\frac{k}{n}\bigg)^{rj} = \frac{1}{j!}\bigg(\frac{k^{r}}{n^{r-1}}\bigg)^{j} \longrightarrow \frac{\theta^{j}}{j!},
\end{equation}
since $k^{r}/n^{r-1} = \big(k/n^{s/(s+1)}\big)^{s+1}$. Formula~\eqref{eq:globalsharp} for $n-1$ types, divided by $B_{n}(k)$, reads 
\begin{equation}
\PP(Y_{n}=0) = \sum_{j \geq 0}(-1)^{j}{\mku}\EE\bigg[\C{Y_{n}}{j}\bigg].
\end{equation}
 By the Bonferroni inequalities, its partial sums ending at odd and at even $j$ bound $\PP(Y_{n}=0)$ from below and from above, respectively. Letting $n \to \infty$ with the cutoff fixed, and then letting the cutoff grow, gives $\PP(Y_{n}=0) \to \sum_{j \geq 0}(-1)^{j}\theta^{j}/j! = e^{-\theta}$. The factorial-moment limits also imply that $Y_{n}$ converges in distribution to a Poisson random variable of mean $\theta$. Moreover, by~\eqref{eq:Bratio} with $t = r$,
\begin{equation}
0 \leq \frac{\displaystyle\bis{n-1}{k-r}{s}}{B_{n}(k)} \leq \frac{B_{n}(k-r)}{B_{n}(k)} = O\bigg(\bigg(\frac{k}{n}\bigg)^{r}\bigg) = o(1).
\end{equation}
Dividing the identity $D_{n,s}(k) = \bis{n-1}{k}{s}-\bis{n-1}{k-r}{s}$ by $B_{n}(k)$ and applying the two preceding limits proves~\eqref{eq:boundary-crossover}.
\end{proof}

Thus $n^{s/(s+1)}$ is the first boundary-crossover scale, the smallest scale of $k$ at which~\eqref{eq:Dboundary} acquires a nontrivial limiting correction, namely $e^{-\lambda^{s+1}}$. Figure~\ref{fig:boundary-crossover}
illustrates this convergence for $s=2$.

\begin{figure}[htbp]
\centering
\includegraphics[width=0.65\linewidth]{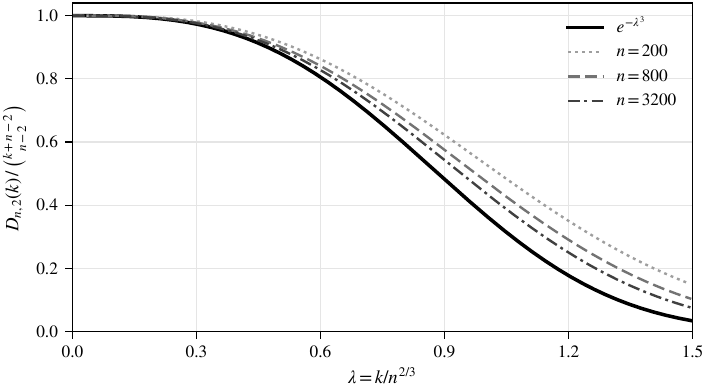}
\caption{Boundary crossover for $s=2$. The finite-$n$ curves plot the exact ratio $D_{n,2}(k)\big/\C{k+n-2}{n-2}$ at every integer $0 \leq k \leq \lfloor 1.5n^{2/3}\rfloor$, against $\lambda = k/n^{2/3}$. The finite-$n$ coefficients were evaluated by exact-integer convolution, followed by first differencing and normalization. The curves approach the limiting profile $e^{-\lambda^{3}}$ of Proposition~\ref{prop:boundary-crossover}. For fixed $\lambda > 0$, the fixed-shift ratios in its proof have relative corrections of order $k/n+1/k$, whose larger scale here is $n^{-1/3}$, suggesting the relatively slow finite-size convergence visible in the figure.}
\label{fig:boundary-crossover}
\end{figure}

The same argument applies when a positive proportion of the bounds attains a fixed minimum.

\begin{corollary}[Unequal-bound boundary crossover]
\label{cor:mixed-boundary-crossover}
Fix $s \geq 1$, and let $\bm{s}=(s_{1},\dots,s_{n})$ depend on $n$, with $s_{i} \geq s$ and $m_{n}=\#\{i\colon s_{i}=s\}$ satisfying $m_{n}/n \to \alpha \in (0,1]$. If $k=k(n) \to \infty$ and $k/n^{s/(s+1)} \to \lambda \in [0,\infty)$, then
\begin{equation}
\label{eq:mixed-boundary-crossover}
\frac{R_{\bm{s}}(k)-R_{\bm{s}}(k-1)}{\displaystyle\C{k+n-2}{n-2}}
\longrightarrow e^{-\alpha\lambda^{s+1}}.
\end{equation}
\end{corollary}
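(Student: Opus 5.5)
Following the proof of Proposition~\ref{prop:boundary-crossover}, I would factor out one coordinate attaining the minimum and reduce to a Poisson limit for violations among the remaining $n-1$ coordinates. Since $m_{n}\to\infty$, some index, say $i=n$ after relabelling, has $s_{n}=s$. Using $(1-x)(1+\cdots+x^{s}) = 1-x^{r}$ with $r=s+1$, the generating function \eqref{eq:Dinhomgf} becomes $(1-x^{r})\prod_{i<n}(1+x+\cdots+x^{s_{i}})$. Writing $\bm{s}'=(s_{1},\dots,s_{n-1})$, this gives
\[
R_{\bm{s}}(k)-R_{\bm{s}}(k-1) = R_{\bm{s}'}(k)-R_{\bm{s}'}(k-r).
\]
The subtracted term is at most $B_{n}(k-r)$, where $B_{n}(k)=\C{k+n-2}{n-2}$ counts weak compositions of $k$ into $n-1$ parts. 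By \eqref{eq:Bratio}, it is therefore $O((k/n)^{r})B_{n}(k)=o(B_{n}(k))$. It remains to show that $R_{\bm{s}'}(k)/B_{n}(k)\to e^{-\alpha\lambda^{r}}$.

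For this, choose a uniform weak composition of $k$ into $n-1$ parts. Let $Y_{n}$ be the number of parts $i$ with $k_{i}\geq s_{i}+1$, so that $\PP(Y_{n}=0)=R_{\bm{s}'}(k)/B_{n}(k)$. By the shifting bijection of \eqref{eq:Sigmaj},
\[
\EE\Big[\tbinom{Y_{n}}{j}\Big]=\sum_{|I|=j}\frac{B_{n}\big(k-\sum_{i\in I}(s_{i}+1)\big)}{B_{n}(k)}.
\]
I would split this sum according to how many indices of $I$ have $s_{i}=s$. There are $\C{m_{n}-1}{j}\sim(\alpha n)^{j}/j!$ subsets $I$ with all $s_{i}=s$. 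Each such term equals $(k/n)^{rj}(1+o(1))$ by \eqref{eq:Bratio}, since the shift $rj$ is fixed. Their total is therefore $\to(\alpha\lambda^{r})^{j}/j!$.

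The main obstacle is controlling the subsets containing some index with $s_{i}\geq s+1$. The bounds $s_{i}$ may be unbounded, so the shifts are not fixed. Each factor $(k-\ell)/(n+k-2-\ell)$ is at most $k/(n-2)$ when the shift is below $k$, and the term vanishes otherwise. Hence a term with total shift $t$ is at most $(k/(n-2))^{t}$, using monotonicity of the product rather than its asymptotics. A subset with $j-1$ minimal indices and at least one larger bound has shift at least $rj+1$. The at most $n^{j}$ such subsets thus contribute $O\big(n^{j}(k/n)^{rj+1}\big)=O(k/n)\cdot O(1)=o(1)$. This uses $n^{j}(k/n)^{rj}=(k^{r}/n^{s})^{j}$ bounded and $k/n\to0$.

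Hence $\EE[\binom{Y_{n}}{j}]\to(\alpha\lambda^{r})^{j}/j!$ for each fixed $j$. Since the finite inclusion--exclusion formula \eqref{eq:9} gives $\PP(Y_{n}=0)=\sum_{j}(-1)^{j}\EE[\binom{Y_{n}}{j}]$, the Bonferroni inequalities apply exactly as in the proof of Proposition~\ref{prop:boundary-crossover}. Fixing the cutoff, then letting it grow, gives $\PP(Y_{n}=0)\to e^{-\alpha\lambda^{r}}$. Combined with the negligible subtracted term, this proves \eqref{eq:mixed-boundary-crossover}.
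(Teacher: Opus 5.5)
Your proof is correct and follows the paper's route. Both use the same peeling identity $R_{\bm{s}}(k)-R_{\bm{s}}(k-1)=R_{\bm{s}'}(k)-R_{\bm{s}'}(k-r)$, the same uniform weak-composition model with factorial moments from \eqref{eq:Bratio}, and the same Bonferroni limit. The only difference is that you let $Y_{n}$ count violations in all $n-1$ coordinates and show that subsets containing a larger-bound index (shift at least $rj+1$) contribute $O(k/n)$ to each factorial moment of fixed order $j$. The paper instead counts only violations among the $m_{n}-1$ minimal coordinates and disposes of the others with a single union bound, $n\,B_{n}(k-r-1)/B_{n}(k)=o(1)$, which sandwiches $R_{\bm{s}'}(k)/B_{n}(k)$.
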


\begin{proof}
For large $n$, permute the coordinates so that $s_{1}=s$, put $r=s+1$ and $B_{n}(k)=\C{k+n-2}{n-2}$, and let $A_{n}(k)$ be the number of bounded compositions in the remaining $n-1$ coordinates. The generating function~\eqref{eq:Dinhomgf} gives
\begin{equation}
\label{eq:mixed-boundary-peel}
R_{\bm{s}}(k)-R_{\bm{s}}(k-1)=A_{n}(k)-A_{n}(k-r).
\end{equation}
Choose an unrestricted weak composition of $k$ into $n-1$ parts uniformly, and let $Y_{n}$ be the number of violations among the $m_{n}-1$ coordinates with bound $s$. By~\eqref{eq:Bratio}, for each fixed $j \geq 1$,
\begin{equation}
\EE\bigg[\C{Y_{n}}{j}\bigg]
=\C{m_{n}-1}{j}\frac{B_{n}(k-rj)}{B_{n}(k)}
\longrightarrow \frac{(\alpha\lambda^{r})^{j}}{j!}.
\end{equation}
The Bonferroni argument of Proposition~\ref{prop:boundary-crossover} yields $\PP(Y_{n}=0) \to e^{-\alpha\lambda^{r}}$. Writing $X_{i}$ for the part in coordinate $i$, the tail probability $\PP(X_{i} \geq t)=B_{n}(k-t)/B_{n}(k)$ is non-increasing in $t$. A violation in a coordinate with bound larger than $s$ requires at least $r+1$ objects, so no upper bound on the larger $s_{i}$ is needed: the union bound shows that the probability of any such violation is at most
\begin{equation}
n\frac{B_{n}(k-r-1)}{B_{n}(k)}
=O\bigg(\frac{k^{r}}{n^{r-1}}\frac{k}{n}\bigg)=o(1),
\end{equation}
since $k^{r}/n^{r-1} \to \lambda^{r}$ and $k/n \to 0$. Consequently,
\begin{equation}
\PP(Y_{n}=0)-o(1) \leq \frac{A_{n}(k)}{B_{n}(k)} \leq \PP(Y_{n}=0).
\end{equation}
Thus $A_{n}(k)/B_{n}(k) \to e^{-\alpha\lambda^{r}}$, whereas $0 \leq A_{n}(k-r)/B_{n}(k) \leq B_{n}(k-r)/B_{n}(k)=o(1)$. Substitution in~\eqref{eq:mixed-boundary-peel} proves the result.
\end{proof}

\subsection{Deformations and arithmetic}

Grouping equal bounds in the inhomogeneous generating polynomial produces $\prod_{i \geq 1}(1+x+\cdots+x^{i})^{L_{i}}$, whose coefficients are the classical supernomial coefficients of Schilling and Warnaar \cite{SchillingWarnaar1998}. Their $q$-supernomials retain recurrences and symmetries while connecting the rank polynomial with partition theory and solvable lattice models. In the homogeneous case, Belbachir and Benmezai \cite{BelbachirBenmezai2014} and Bazeniar, Ahmia, and Belbachir \cite{BazeniarAhmia2018} develop related $q$-deformations and symmetric-function identities. Belbachir and Igueroufa \cite{BelbachirIgueroufa2020b} develop arithmetic properties of generalized binomial coefficients, including Lucas-type congruences and extensions of theorems of Kummer and Ram. Belbachir, Bouroubi, and Khelladi \cite{BelbachirBouroubi2008} relate the homogeneous coefficients to Bell polynomials and Fibonacci numbers.

\subsection{Open problems}
\label{sec:open}

\paragraph{Deformations with a chain interpretation}
Schilling and Warnaar \cite{SchillingWarnaar1999} give crystal-path and generalized Kostka interpretations of $q$-deformed coefficient arrays. For the deformations discussed above, the more specific question is whether a chain model can respect the nested-hook rule of Remark~\ref{rem:cg}. One target is a non-negative integer grading $g(C)$, constant along each symmetric chain, for which the coefficient at rank $k$ is the sum of $q^{g(C)}$ over chains meeting that rank. The question is whether a specified deformation admits such a grading on the coordinate grid, compatible with the nested-hook induction. For this model, the lower-half first differences must be coefficientwise non-negative. Which of these deformations admit this additional structure?

\paragraph{Weighted antichains}
For positive weights $w_{i}$, every level $\sum_{i}w_{i}k_{i} = c$ is an antichain. For homogeneous products with $n \geq 2$ and $s \geq 1$, Griggs's theorem \cite{Griggs1984} implies that a weighted level attains the width of the poset if and only if all weights are equal. The application-specific question is to optimize the level size under additional constraints that a signature scheme imposes on weights or encoding parameters, as in Section~\ref{sec:hash}.

\paragraph{Counting the decompositions}
Proposition~\ref{prop:scd-isotypic} shows that every symmetric chain decomposition of $P_{\bm{s}}$ has the multiset of chain lengths dictated by the Clebsch--Gordan rule. For homogeneous products, Tomon \cite{Tomon2025} proved
\begin{equation}
\#\operatorname{SCD}(\{0,1\}^{n}) = \bigg(\frac{n}{2e}+o(n)\bigg)^{2^{n}},
\end{equation}
and, more generally, uniformly for $s \geq 1$,
\begin{equation}
\#\operatorname{SCD}(\{0,1,\dots,s\}^{n}) = n^{(1-o_{n}(1))(s+1)^{n}}.
\end{equation}
The second estimate determines the logarithmic growth. Further questions concern identifying an analogue of the factor $1/2e$ for fixed $s \geq 2$ and obtaining sharper asymptotic estimates for the number of symmetric chain decompositions of unequal products $P_{\bm{s}}$.


\section{An algebraic coda on conventions for binomial coefficients}
\label{sec:conventions}

The support-sharp formula~\eqref{eq:BBK21} contains only binomial coefficients with non-negative upper arguments and hence does not depend on how we extend ordinary binomial coefficients to negative arguments. The proper-subset sums \eqref{eq:9} and \eqref{eq:11} do depend on that extension, and the elementary convention~\eqref{eq:elementary} is the extension the generating function itself supplies.

To see this, write the rank polynomial as
\begin{equation}
\prod_{i=1}^{n}\big(1-x^{a_{i}}\big)\cdot\frac{1}{(1-x)^{n}},
\quad a_{i} = s_{i}+1 ,
\end{equation}
and expand the $n+1$ factors as a Cauchy product. Each of the first $n$ series has coefficient $1$ at exponent $0$, coefficient $-1$ at exponent $a_{i}$, and no other terms, while the last has coefficient $\C{u+n-1}{n-1}$ at exponent $u$. A nonzero contribution to $[x^{k}]$ therefore selects the subset $I \subseteq [n] = \{1,\dots,n\}$ of indices for which the exponent $a_{i}$ is selected, carries the sign $(-1)^{|I|}$, and leaves the exponent $k-a_{I}$, with $a_{I} = \sum_{i \in I}a_{i}$, for the last factor. That exponent is a power-series index, so it is non-negative by construction: the subsets with $a_{I} > k$ contribute no term at all. They are not annihilated by a convention; they do not arise.

Inclusion--exclusion proceeds differently. Every subset is present from the start, and the terms indexed by $a_{I} > k$ record empty intersections, which the elementary convention sets to zero. Under the polynomial convention, the additional terms change the value of the full transform; the results below identify their total and the proper-subset sum.

Under the polynomial convention mentioned in Section~\ref{sec:pie}, $\C{u}{v}$ is the polynomial $u(u-1)\cdots(u-v+1)/v!$ in $u$, and for negative $u$ this gives
\begin{equation}
\label{eq:25}
\C{u}{v} = (-1)^{v}\C{-u+v-1}{v}.
\end{equation}

\begin{proposition}[The full polynomial transform]
\label{prop:vanish}
Let $n \geq 1$ and write $[n] \deq \{1,\dots,n\}$. Let $\bm{s}=(s_{1},\dots,s_{n})$ have non-negative integer entries, and set $a_{i}=s_{i}+1$, $a_{I} = \sum_{i \in I}a_{i}$, and $N = \sum_{i=1}^{n}s_{i}$. Under the polynomial convention, for every $k \in \ZZ$,
\begin{equation}
\label{eq:vanish}
\sum_{I\subseteq[n]}(-1)^{|I|}\,\C{\,k-a_{I}+n-1\,}{n-1} = 0.
\end{equation}
\end{proposition}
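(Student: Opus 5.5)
The plan is to read the left-hand side of \eqref{eq:vanish} as a finite-difference operator applied to a polynomial. Under the polynomial convention, $p(u)\deq\C{u+n-1}{n-1}$ is a polynomial in $u$ of degree $n-1$, and this holds for every integer $u$, not just $u \geq 0$. The left-hand side of \eqref{eq:vanish} equals
\begin{equation*}
\sum_{I\subseteq[n]}(-1)^{|I|}\,p(k-a_{I}) = \Big(\prod_{i=1}^{n}(1-E^{-a_{i}})\Big)p\,(k),
\end{equation*}
where $E$ denotes the shift operator $(Ef)(u)=f(u+1)$. The identity comes from expanding the product of commuting operators: choosing the term $-E^{-a_{i}}$ for $i\in I$ and $1$ otherwise yields $(-1)^{|I|}E^{-a_{I}}$.

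Each factor $1-E^{-a_{i}}$ is a backward difference with step $a_{i}\geq 1$. Such a factor sends a polynomial of degree $d\geq 1$ to a polynomial of degree exactly $d-1$, and it sends a constant to $0$. The leading-term check is direct: $u^{d}-(u-a)^{d}=da\,u^{d-1}+\cdots$. We apply the $n$ factors in turn to $p$, which has degree $n-1$. After $n-1$ applications at most a constant remains, and the last application kills it. The resulting function is therefore identically zero as a polynomial in $k$, so \eqref{eq:vanish} holds for every $k\in\ZZ$, including negative $k$ and $k>N$.

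The main point to watch is that the argument uses the polynomial convention essentially. Under the elementary convention \eqref{eq:elementary}, $p(u)$ is not a polynomial on all of $\ZZ$, and the difference argument fails; indeed the sum then equals $R_{\bm{s}}(k)$. The step requiring care is therefore to confirm that $\C{u+n-1}{n-1}$, as defined by the polynomial convention, agrees with the polynomial $(u+1)\cdots(u+n-1)/(n-1)!$ at all integers, including negative $u$. That agreement holds by definition, and it is consistent with \eqref{eq:25}.

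For $n=1$ the claim reduces to $\C{k}{0}-\C{k-a_{1}}{0}=1-1=0$, which agrees with the general argument. If a cross-check is desired, the same conclusion follows from the Lagrange-interpolation identity $\sum_{I}(-1)^{|I|}(c-a_{I})^{m}=0$ for $m<n$. This identity comes from expanding $(c-a_{I})^{m}$ and noting that every monomial in the $a_{i}$ of degree less than $n$ misses some index. For such a monomial, the sum over $I$ pairs each $I$ with $I\triangle\{i\}$ for a missing index $i$, and the two terms cancel.
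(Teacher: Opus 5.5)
Your proof is correct and follows the paper's argument: you write the sum as $\prod_{i=1}^{n}(1-E^{-a_{i}})$ applied to the degree-$(n-1)$ polynomial $\binom{u+n-1}{n-1}$ and observe that each difference with step $a_{i}\geq 1$ lowers the degree by one and annihilates constants. The paper does the same with $\nabla_{a_{i}}=1-T_{a_{i}}$; your subset-cancellation cross-check is valid but not needed.
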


\begin{proof}
Set $f(x)=\C{x+n-1}{n-1}$, a polynomial of degree $n-1$, and define $\nabla_{h}f(x)=f(x)-f(x-h)$. With the shift $T_{h}f(x) = f(x-h)$, we have $\nabla_{h} = 1-T_{h}$, and since the shifts commute and $T_{a}T_{b} = T_{a+b}$, $\prod_{i}(1-T_{a_{i}}) = \sum_{I\subseteq[n]}(-1)^{|I|}T_{a_{I}}$. Hence the left-hand side of~\eqref{eq:vanish} is
\begin{equation}
\bigg(\prod_{i=1}^{n}\nabla_{a_{i}}\bigg)\,f(k).
\end{equation}
Since $a_{i} \geq 1$, each nonzero-step difference lowers the degree of a nonconstant polynomial by one and annihilates a constant. As $f$ has degree $n-1$, applying $n$ such operators annihilates it.
\end{proof}

Proposition~\ref{prop:vanish} is classical. In the equal-step case $a_{1}=\cdots=a_{n}$ it is the basic annihilation property of the finite difference calculus, $\Delta^{n}f = 0$ for $\deg f < n$ \cite[{\S}5.3]{GrahamKnuth1994}. The unequal-step form is the dimension-one case of the annihilation relations of Dahmen and Micchelli \cite{DahmenMicchelli1988}: for a list $X$ of vectors, the operators $\prod_{x \in Y}\nabla_{x}$ indexed by the cocircuits $Y$ of $X$ define the discrete Dahmen--Micchelli space, and in dimension one the single cocircuit is all of $X$. Its polynomial solutions are exactly the polynomials of degree at most $n-1$; the full discrete space can also contain nonpolynomial quasipolynomials.

The Dahmen--Micchelli setting identifies $R_{\bm{s}}$ as the one-dimensional discrete box spline with unit directions and coordinate bounds $0 \leq k_{i} < a_{i}$. Let $B_{\mathrm{seq}}(k)=\C{k+n-1}{n-1}$ for $k \geq 0$, and let $B_{\mathrm{seq}}(k)=0$ for $k < 0$. Applying $\nabla_{a}$ multiplies its generating series by $1-x^{a}$, so
\begin{equation}
R_{\bm{s}} = \bigg(\prod_{i=1}^{n}\nabla_{a_{i}}\bigg){\mku}B_{\mathrm{seq}}.
\end{equation}
Under the polynomial convention, $B_{\mathrm{poly}}(k)=\C{k+n-1}{n-1}$ extends the same values to a polynomial of degree $n-1$ on all integers. Proposition~\ref{prop:vanish} gives $\big(\prod_{i=1}^{n}\nabla_{a_{i}}\big)B_{\mathrm{poly}}=0$; Corollary~\ref{cor:tail} identifies the contribution of the extra terms.

\begin{corollary}[The tail and the proper-subset transform]
\label{cor:tail}
Under the hypotheses and notation of Proposition~\ref{prop:vanish}, for $0 \leq k \leq N$ the polynomial convention gives
\begin{equation}
\label{eq:tail}
\sum_{\substack{I \subseteq [n] \\ a_{I} > k}}(-1)^{|I|}\,\C{\,k-a_{I}+n-1\,}{n-1} = -R_{\bm{s}}(k),
\end{equation}
and
\begin{equation}
\label{eq:propertransform}
\sum_{I\subsetneq[n]}(-1)^{|I|}\,\C{\,k-a_{I}+n-1\,}{n-1} = \C{N-k+n-1}{n-1}.
\end{equation}
The right-hand side of \eqref{eq:propertransform} equals $R_{\bm{s}}(k)$ if and only if $N-k \leq \min(s_{1},\dots,s_{n})$.
\end{corollary}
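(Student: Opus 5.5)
The plan is to split the full sum of Proposition~\ref{prop:vanish} into the subsets with $a_{I} \leq k$ and those with $a_{I} > k$, and then to use the complement involution $I \mapsto [n]\setminus I$ for \eqref{eq:propertransform}.

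\emph{Step 1: the subsets with $a_{I} \leq k$.} For these subsets the upper argument $k-a_{I}+n-1$ is at least $n-1 \geq 0$, so the polynomial and elementary conventions agree. The Cauchy-product description given above shows that these terms are exactly the coefficient extraction of $[x^{k}]\prod_{i}(1-x^{a_{i}})(1-x)^{-n}$. Their sum is therefore $R_{\bm{s}}(k)$, as in \eqref{eq:9}. Subtracting this from \eqref{eq:vanish} gives \eqref{eq:tail} at once.

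\emph{Step 2: the full set $I=[n]$.} Here $a_{[n]} = N+n$, so the term is $(-1)^{n}\C{k-N-1}{n-1}$. For $0 \leq k \leq N$ the upper argument $u = k-N-1$ is negative. By \eqref{eq:25},
\[
\C{u}{n-1} = (-1)^{n-1}\C{N-k+n-1}{n-1}.
\]
The $I=[n]$ term is therefore $-\C{N-k+n-1}{n-1}$. Removing it from the vanishing full sum \eqref{eq:vanish} gives \eqref{eq:propertransform}. This sign bookkeeping is the only place where care is needed. One can check it at $n=1$: there the binomial has lower index $0$, both sides equal $1$, and the identity is consistent.

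\emph{Step 3: the final assertion.} The last claim, that the right-hand side equals $R_{\bm{s}}(k)$ if and only if $N-k \leq \min_{i} s_{i}$, is exactly Proposition~\ref{prop:iff}.

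I expect no real obstacle beyond the sign in Step 2, together with the observation that in Step 1 no subset with $a_{I} \leq k$ has a negative upper argument. One could alternatively derive \eqref{eq:propertransform} by reindexing $I \mapsto [n]\setminus I$ and applying \eqref{eq:25} to every term. That route turns the proper-subset sum into the support-sharp expansion at $N-k$ plus a polynomial-convention tail. It is more roundabout than Step 2, so I would use the direct computation.
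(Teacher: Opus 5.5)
Your proof is correct and is essentially the paper's: you split the vanishing transform \eqref{eq:vanish} according to whether $a_{I}\leq k$, identify the non-tail terms with the elementary inclusion--exclusion sum (you justify this via the Cauchy-product description, the paper via \eqref{eq:9}, which amounts to the same thing), isolate the $I=[n]$ term and apply the upper negation \eqref{eq:25}, and quote Proposition~\ref{prop:iff} for the final criterion. The complement involution $I\mapsto[n]\setminus I$ announced in your first sentence is never actually used, so that part of the plan can simply be dropped.
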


\begin{proof}
If $a_{I} \leq k$, then $k-a_{I}+n-1 \geq n-1$, so the elementary and polynomial conventions agree on that term. The subsets satisfying $a_{I} \leq k$ therefore index precisely the nonzero terms of the elementary inclusion--exclusion formula~\eqref{eq:9}, whose sum is $R_{\bm{s}}(k)$. Subtracting this sum from the vanishing transform~\eqref{eq:vanish} proves \eqref{eq:tail}.

For \eqref{eq:propertransform}, isolate the term $I=[n]$ in \eqref{eq:vanish}. Since $a_{[n]}=N+n$, the proper-subset sum is
\begin{equation}
-(-1)^{n}\C{k-N-1}{n-1} = \C{N-k+n-1}{n-1}.
\end{equation}
For the second equality, apply~\eqref{eq:25} with $u = k-N-1 < 0$ and $v = n-1$, which gives $\C{k-N-1}{n-1} = (-1)^{n-1}\C{N-k+n-1}{n-1}$, and note that $-(-1)^{n}(-1)^{n-1} = 1$. Proposition~\ref{prop:iff} gives the stated criterion for equality with $R_{\bm{s}}(k)$.
\end{proof}

\begin{example}[Three values from one alternating sum]
\label{ex:conventions}
Take $(n,s,k)=(3,2,2)$, so $N=6$. The proper-subset sum in~\eqref{eq:11} is
\[
\C{4}{2}-3\C{1}{2}+3\C{-2}{2}.
\]
Under the elementary convention it equals $\bis{3}{2}{\,2}=6$. Under the polynomial convention, $\C{-2}{2}=3$, so the same expression equals $6+9=15$. Including the final term gives the full polynomial transform
\[
6+9-\C{-5}{2}=6+9-15=0.
\]
The value $15=\C{6}{2}$ is the unrestricted enumeration for the complementary total $N-k=4$, as in~\eqref{eq:propertransform}; the extra polynomial terms in the full transform sum to $9-15=-6$, as in~\eqref{eq:tail}.
\end{example}

\begin{remark}
\label{rem:reciprocity}
Equation~\eqref{eq:propertransform} is a reciprocity statement. Its proof turns on the upper negation~\eqref{eq:25}, which, when applied to $B_{\mathrm{poly}}$, gives the classical reciprocity for the simplex: under the polynomial convention and for integers $k \geq 1$, $(-1)^{n-1}B_{\mathrm{poly}}(-k)$ is the number of solutions of $k_{1}+\cdots+k_{n}=k$ in strictly positive integers. The reciprocity of the rank polynomial itself is the palindromic symmetry~\eqref{eq:symmetry}, proved in Section~\ref{sec:sym} by the complement bijection; \eqref{eq:propertransform} uses the simplex reciprocity above, applied to the single term indexed by $I=[n]$. In this form, the proper-subset transform is the unrestricted enumeration for the complementary total $N-k$, and by Proposition~\ref{prop:iff} it agrees with the bounded enumeration exactly in the upper boundary regime.
\end{remark}

When all $s_{i}=s$, grouping subsets by their cardinality reduces \eqref{eq:vanish} to the full homogeneous transform. Under the polynomial convention, the same grouping reduces \eqref{eq:propertransform} to
\begin{equation}
\label{eq:homproper}
\sum_{j=0}^{n-1}(-1)^{j}{\mku}\C{n}{j}{\mku}\C{k-j(s+1)+n-1}{n-1}
= \C{ns-k+n-1}{n-1}, \quad 0 \leq k \leq ns.
\end{equation}
Under the elementary convention, the same alternating sum equals $\bis{n}{k}{s}$ by~\eqref{eq:11}. For $(n-1)s \leq k \leq ns$, all of its upper arguments are non-negative, so the two conventions agree term by term and~\eqref{eq:homproper} reduces to~\eqref{eq:24}. The polynomial convention thus extends this upper boundary identity to the whole admissible range, where it no longer computes $\bis{n}{k}{s}$.


\section{Concluding remarks}
\label{sec:conclusion}

The rank polynomial links bounded compositions and rank sizes of products of chains with discrete-uniform sums and $\mathrm{SU}(2)$ tensor-product characters. The complement involution gives coefficient symmetry and rank reversal and transfers the lower-boundary figurate-number identity to the upper boundary. Combined with the support-sharp inclusion--exclusion formula, the involution reduces the alternating sum to one term at either boundary and roughly halves the worst-case number of summands (Corollary~\ref{cor:sharpsym}). At the level of profiles, Proposition~\ref{prop:scd-isotypic} equates the number of chains beginning at each rank with the corresponding irreducible multiplicity and equates chain lengths with constituent dimensions. The de Bruijn--Tengbergen--Kruyswijk induction mirrors the Clebsch--Gordan rule (Remark~\ref{rem:cg}). Both statements hold for unequal bounds and hence for mixed spins.

For equal bounds, convolution gives log-concavity and an exact central mode. The Gaussian local approximation determines the asymptotic size of the central coefficient, while a higher-order local expansion determines the central first difference, which is smaller by a factor of order $n$. Corollary~\ref{cor:central-corrections} gives the explicit $1/n$ corrections to both leading terms. This difference in order underlies the passage from binomial-type coefficients to Catalan-type first differences. At the boundary, Proposition~\ref{prop:boundary-crossover} identifies $n^{s/(s+1)}$ as the first crossover scale for $D_{n,s}(k)$, with limiting correction $e^{-\lambda^{s+1}}$. Corollary~\ref{cor:mixed-boundary-crossover} extends this limit to unequal bounds, with factor $e^{-\alpha\lambda^{s+1}}$ when the fraction of bounds equal to the fixed minimum $s$ tends to $\alpha>0$. A central rank layer is also a largest antichain, which underpins the coding application of Section~\ref{sec:hash}.

Under the polynomial convention, the full inclusion--exclusion transform vanishes (Proposition~\ref{prop:vanish}), and the terms suppressed by the elementary convention sum to $-R_{\bm{s}}(k)$ (Corollary~\ref{cor:tail}). The proper-subset transform returns $\C{N-k+n-1}{n-1}$ instead, so changing the convention replaces the bounded enumeration by the unrestricted enumeration for the complementary total.

Further questions concern compatibility of $q$-deformed chain models with the nested-hook construction, optimization of weighted levels under signature-scheme constraints, and sharper asymptotic estimates for the number of symmetric chain decompositions of homogeneous and unequal products.


\section*{CRediT authorship contribution statement}
\textbf{Hac\`{e}ne Belbachir:} Conceptualization, formal analysis, investigation, validation, writing -- review \& editing.
\textbf{Jos\'{e}~Ricardo G. Mendon\c{c}a}: Conceptualization, formal analysis, investigation, validation, writing -- original draft, writing -- review \& editing.

\section*{Declaration of generative AI and AI-assisted technologies in the writing process}
During the preparation of this manuscript the authors used Anthropic's Claude Opus~5.5 and OpenAI's GPT-6.1-Sol AI models for drafting assistance and algebraic and numerical checking of displayed identities. The authors planned, surveyed, and prepared the majority of the manuscript, reviewed and revised the AI-assisted material, and take full responsibility for the content of the publication.

\section*{Funding}
H.B. acknowledges support from the RECITS Laboratory, USTHB, and DG-RSDT, Algeria. J.R.G.M. thanks FAPESP, Brazil, for partial support through research grant no.~2020/04475-7.

\section*{Data availability}
No datasets were used or generated in this work. The curves in Figure~\ref{fig:boundary-crossover} were computed in Python from the coefficient formulas in the manuscript.

\section*{Declaration of competing interest}
The authors have no relevant financial or non-financial interests to disclose.


\end{document}